\definecolor{darkred}{rgb}{1,0,0} 
\definecolor{darkgreen}{rgb}{0,1,0}
\definecolor{darkblue}{rgb}{0,0,1}
\newcommand{\udots}{\mathinner{\mskip1mu\raise1pt\vbox{\kern7pt\hbox{.}}
\mskip2mu\raise4pt\hbox{.}\mskip2mu\raise7pt\hbox{.}\mskip1mu}}
\newcommand{\SC}{{\mathcal{C}}}
\newcommand{\SE}{{\mathcal{E}}}
\newcommand{\SF}{{\mathcal{F}}}
\newcommand{\SK}{{\mathcal{K}}}
\newcommand{\SL}{{\mathcal{L}}}
\newcommand{\SM}{{\mathcal{M}}}
\newcommand{\SO}{{\mathcal{O}}}
\newcommand{\SP}{{\mathcal{P}}}
\newcommand{\ST}{{\mathcal{T}}}
\newcommand{\SX}{{\mathcal{X}}}
\newcommand{\SY}{{\mathcal{Y}}}
\newcommand{\PP}{\mathbb{P}}
\newcommand{\ZZ}{\mathbb{Z}}
\newcommand{\NN}{\mathbb{N}}
\newcommand{\CC}{\mathbb{C}}
\newcommand{\Ext}{\operatorname{Ext}}
\newcommand{\Spec}{\operatorname{Spec}}
\newcommand{\Hom}{\operatorname{Hom}}
\newcommand{\Sch}{\operatorname{Sch}}
\newcommand{\id}{\operatorname{Id}}
\newcommand{\too}{\longrightarrow}
\newcommand{\rk}{\operatorname{rk}}
\newcommand{\GL}{\operatorname{GL}}
\newcommand{\PGL}{\operatorname{PGL}}
\newcommand{\PSL}{\operatorname{PSL}}
\newcommand{\Pic}{\operatorname{Pic}}
\newcommand{\op}{\operatorname}
\newcommand{\svb}{\op{s}}
\newcommand{\ssvb}{\op{ss}}
\newcommand{\Div}{\op{Div}}
\newcommand{\Quot}{\op{Quot}}
\newtheorem{proposition}{Proposition}[section]
\newtheorem{theorem}[proposition]{Theorem}
\newtheorem{lemma}[proposition]{Lemma}
\newtheorem{corollary}[proposition]{Corollary}
\theoremstyle{definition}
\newtheorem{definition}[proposition]{Definition}
\numberwithin{equation}{section}
\begin{document}

\title[Isomorphisms between moduli stacks]{Isomorphisms between moduli stacks of vector bundles with fixed 
determinant}

\author[D. Alfaya]{David Alfaya}

\address{Department of Applied Mathematics and Institute for Research in Technology, ICAI School of Engineering,
Comillas Pontifical University, C/Alberto Aguilera 25, 28015 Madrid, Spain}

\email{dalfaya@comillas.edu}

\author[I. Biswas]{Indranil Biswas}

\address{Department of Mathematics, Shiv Nadar University, NH91, Tehsil Dadri,
Greater Noida, Uttar Pradesh 201314, India}

\email{indranil.biswas@snu.edu.in, indranil29@gmail.com}

\author[T. L. G\'omez]{Tom\'as L. G\'omez}

\address{Instituto de Ciencias Matem\'aticas (CSIC-UAM-UC3M-UCM),
Nicol\'as Cabrera 15, Campus Cantoblanco UAM, 28049 Madrid, Spain}

\email{tomas.gomez@icmat.es}

\subjclass[2010]{14C34, 14H60, 14D23}

\keywords{Automorphism group, moduli stack, vector bundle.}

\date{}

\begin{abstract}
We classify all isomorphisms between moduli stacks of vector bundles of fixed determinant on a smooth
complex projective of genus at least $4$. It is shown
that each isomorphism between two 
different moduli stacks can be described as a composition of a pullback using an isomorphism of curves,
dualization of vector bundles and tensoring with the pullback of a line bundle on the curve. We finally compare 
the 2-group of automorphisms of the moduli stack of vector bundles
with the group of automorphisms of the moduli space of semistable vector bundles. 
\end{abstract}

\maketitle

\section{Introduction}

Let $X$ be a smooth irreducible complex projective curve. Given a fixed line bundle $\xi$ over $X$, let $\SM(X,r,\xi)$ denote the moduli stack of 
vector bundles $E$ of rank $r$ on $X$ together with an isomorphism $\det(E)\,:=\,\wedge^r(E)\,\stackrel{\cong}{\longrightarrow}
\, \xi$, and let $M^{\ssvb}(X,r,\xi)$ denote the
moduli scheme of semistable vector bundles with these data. Kouvidakis and Pantev proved that the automorphism group of $M^{\ssvb}(X,r,\xi)$ is 
generated by appropriate combinations of the following three ``basic transformations'' of vector bundles (see \cite{KP95}):
\begin{itemize}
\item Tensor $E$ by some line bundle $L$ of order $r$ on $X$: $E\,\longmapsto\, E\otimes L$.
\item Dualize the vector bundle and tensor with a suitable line bundle $L$ so that the
determinant is unchanged: $E\, \longmapsto\, E^\vee\otimes L$.
\item Take the pullback of $E$ using an automorphism $\sigma\,:\,X\,\longrightarrow\, X$ of the curve
and tensor $E$ or its dual with a suitable line bundle $L$ so that the determinant is
unchanged: $E\, \longmapsto \,L\otimes \sigma^*E$ or $E\, \longmapsto\, L\otimes\sigma^* E^\vee$.
\end{itemize}
This result was also proved with different techniques in \cite{HR04} and \cite{BGM13}, and in \cite{AB} a variation of
the argument from \cite{BGM13} was used to prove that all 2--birational maps between any pair of moduli schemes of
semistable vector bundles are also given by combinations of these three basic transformations (it was also shown there that
every 2--birational map between these moduli spaces actually extends to an isomorphism).

In this paper, we describe all possible isomorphisms between moduli stacks of vector bundles with fixed determinant. The isomorphisms between two moduli 
stacks is not a set, but a category, whose objects are morphisms
of groupoids (they are compatible with 
pull-backs) and whose morphisms are natural transformations between
morphisms of groupoids (Definition \ref{def:mapbetweenstacks}). In other words, the automorphism group of a moduli stack is
a 2-group.

We show that all possible isomorphisms between moduli stacks of vector bundles are isomorphic to the following maps which generalize the ``basic transformations'' to morphisms between stacks.
\begin{definition}
\label{def:basicTransformation}
Given curves $X$ and $X'$ with line bundles $\xi$ and $\xi'$ over $X$ and $X'$ respectively, and given
\begin{itemize}
\item an isomorphism $\sigma\,:X'\,\too\, X$,
\item a sign $s\,\in \, \{+1,\, -1\}$, and
\item a line bundle $L$ over $X$ together with an isomorphism $\xi'\,\cong\, \sigma^*(\xi\otimes L^r)^s$
\end{itemize}
we define the \emph{basic transformation} $\ST_{\sigma,L,s}:\SM(X,r,\xi) \longrightarrow \SM(X',r,\xi')$ as the following morphism of stacks. For every $T$-family of vector bundles $E\,\longrightarrow\, X\times T$:
$$\ST_{\sigma,L,s}(E)\ =\ \left \{ \begin{array}{ll}
(\sigma\times \id_T)^*( E\otimes \pi_X^*L ), & s\,=\,1,\\
(\sigma\times \id_T)^*(E^\vee \otimes \pi_X^*L ), & s\,=\,-1.
\end{array}\right.$$
On morphisms, $\ST_{\sigma,L,s}$ is defined as follows. Let $E'\,\longrightarrow\, X\times T'$ be 
another object, $f:T\to T'$ a morphism, 
and let $\varphi:E\stackrel{\cong}{\longrightarrow} (\id\times f)^*E'$ 
a morphism between the objects $E$ and $E'$ in the category $\SM(X,r,\xi)$. 
$$\ST_{\sigma,L,s}(\varphi)\ =\ \left \{ \begin{array}{ll}
(\sigma\times \id_T)^*( \varphi\otimes \id_{\pi_X^*L} ), & s\,=\,1,\\
(\sigma\times \id_T)^*((\varphi^{-1}){}^\vee \otimes \id_{\pi_X^*L} ), 
& s\,=\,-1.
\end{array}\right.$$
\end{definition}

Note that $\ST_{\sigma,L,s}$ is a covariant
functor. This is why we introduce the inverse $\varphi^{-1}$ in the last line. It is clear that $\ST_{\sigma,s,L}$
gives an isomorphism between the moduli stacks. The main result of this work is to to prove that these isomorphisms are the only ones up to 
isomorphism.

\begin{theorem}[{See Theorem \ref{thm:main}}]
\label{thm:intro}
Let $X$ and $X'$ be smooth complex projective curves of genus $g$ and $g'$ respectively, with $g,\, g'\, \ge\, 4$. Let $\xi$ and $\xi'$ be line bundles on $X$ and $X'$
respectively. If $\Psi\,:\, \SM(X,r,\xi) \,\longrightarrow \,\SM(X',r',\xi')$ is an isomorphism of stacks,
then $r\,=\,r'$, and there exist a basic transformation $\ST_{\sigma,L,s}$, for which $s$ can always be taken as $+1$ if $r\,=\,2$, and such that $\Psi$ is isomorphic to $\ST_{\sigma,L,s}$.
\end{theorem}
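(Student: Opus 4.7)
The plan is to reduce the classification of isomorphisms of stacks to the corresponding classification for coarse moduli schemes proved in \cite{AB}, and then to upgrade the induced map of coarse moduli spaces to a natural isomorphism of functors by exploiting the $\mu_r$-gerbe structure of $\SM$ over its good moduli space.

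First, I would characterise the (semi)stable substacks intrinsically. The semistable substack $\SM^{\ssvb}(X,r,\xi)\subset\SM(X,r,\xi)$ is the unique maximal open substack admitting a good moduli space (in the sense of Alper), and the stable substack $\SM^{\svb}(X,r,\xi)$ is its open locus of objects with finite automorphism group (equal to $\mu_r$, the scalar automorphisms preserving the fixed determinant). Both characterisations are intrinsic, so any isomorphism $\Psi$ of stacks restricts to isomorphisms of both substacks and descends to an isomorphism of coarse moduli schemes $\phi\colon M^{\ssvb}(X,r,\xi)\to M^{\ssvb}(X',r',\xi')$. By the main result of \cite{AB}, $\phi$ is induced by a basic transformation $\ST_{\sigma,L,s}$; this forces $r=r'$ and $g=g'$, while for $r=2$ the identity $E^{\vee}\cong E\otimes(\det E)^{-1}$ lets us absorb dualisation into a tensor and take $s=+1$. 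After composing with $\ST_{\sigma,L,s}^{-1}$, I may assume that $\Psi$ is an autoequivalence of $\SM(X,r,\xi)$ inducing the identity on the coarse moduli space, and the remaining task is to construct a natural isomorphism $\Psi\cong\id$.

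Over the stable locus, $\SM^{\svb}(X,r,\xi)$ is a $\mu_r$-gerbe over the smooth scheme $M^{\svb}$. For any $T$-family of stable bundles $E\to X\times T$ with determinant $\xi$, the hypothesis that $\Psi$ induces the identity on coarse moduli yields pointwise isomorphisms $\Psi(E)_t\cong E_t$; a universal-family-versus-twisting argument then produces a line bundle $\mathcal{N}_E$ on $T$, natural in $E$, together with an isomorphism $\Psi(E)\cong E\otimes\pi_T^{*}\mathcal{N}_E$ and a canonical trivialisation of $\mathcal{N}_E^{\otimes r}$ (forced by preservation of the fixed determinant). Naturality in $E$ packages this data into a cohomology class $\alpha\in H^1(\SM^{\svb}(X,r,\xi),\mu_r)$ measuring the deviation of $\Psi$ from $\id$, and the restriction of $\Psi$ to the stable substack is naturally isomorphic to the identity precisely when $\alpha=0$.

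The main obstacle is therefore twofold. First, one must prove $\alpha=0$: using $g\geq 4$, the known simple-connectedness and Picard computations for moduli spaces of stable bundles, together with the explicit gerbe class of $\SM^{\svb}$ over $M^{\svb}$, should pin down $H^1(\SM^{\svb}(X,r,\xi),\mu_r)$ tightly enough to force $\alpha=0$ given that we have already absorbed all basic-transformation freedom (in particular the $J[r]$-worth of tensor-by-$r$-torsion autoequivalences, which would otherwise account for a nontrivial part of this $H^1$). Second, the natural isomorphism $\Psi\cong\id$ obtained on the dense open substack $\SM^{\svb}$ must be extended to the whole of $\SM(X,r,\xi)$, including the strictly semistable and unstable bundles with their larger stabilisers; for this I would proceed stratum by stratum along the Harder--Narasimhan stratification, exploiting that for $g\geq 4$ the unstable locus has positive codimension and using a rigidity principle for 2-morphisms between algebraic stacks that uniquely extends 2-isomorphisms defined on dense open substacks with normal complements.
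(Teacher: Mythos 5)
Your overall skeleton matches the paper's: intrinsically recover the semistable substack, descend to the coarse moduli scheme, invoke the classification of scheme isomorphisms from \cite{KP95,BGM13,AB}, reduce to an automorphism $\Phi$ inducing the identity on $M^{\ssvb}(X,r,\xi)$, and show that over the stable locus $\Phi$ is tensoring by an $r$-torsion line bundle, which must be trivial because $\Pic(\SM^{\svb}(X,r,\xi))\cong\ZZ$ is torsion-free (your class $\alpha\in H^1(\SM^{\svb},\mu_r)$ is exactly the paper's line bundle $\SL$ with trivialized $r$-th power, and it vanishes for essentially the reason you indicate). Two caveats on the earlier steps: the paper recovers $\SM^{\ssvb}$ via the ``beyond GIT'' notion of $\SL$-semistability for a generator of $\Pic(\SM)\cong\ZZ$, which is concretely checkable, whereas your characterisation as ``the unique maximal open substack admitting a good moduli space'' would itself require a proof of maximality and uniqueness; and the pointwise isomorphisms $\Psi(E)_t\cong E_t$ only assemble into a line bundle $\mathcal{N}_E$ on $T$ after an actual argument (the paper uses $R^0\pi_*(\SE^\vee\otimes\SE')$ together with Grauert's theorem), which you gesture at but should not treat as automatic.

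The genuine gap is the final extension step. You propose to extend the 2-isomorphism $\Phi|_{\SM^{\svb}}\cong\id$ to all of $\SM$ by ``a rigidity principle for 2-morphisms \dots\ that uniquely extends 2-isomorphisms defined on dense open substacks''. No such principle is available here: $\SM(X,r,\xi)$ is not separated, so density of $\SM^{\svb}$ does not by itself control morphisms out of the whole stack --- this is precisely the pitfall the paper flags in the introduction. Concretely, to show $\Phi\cong\id$ one must produce a compatible isomorphism $f^*\SE\cong f^*\Phi^*\SE$ for \emph{every} test family $f:T\to\SM$, including $T$ a single point landing in the unstable locus, where $T$ has no dense stable sublocus to extend from and no nontrivial Harder--Narasimhan stratification to induct over. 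The paper's substitute is the ``2-dimensional density'' property (Lemma \ref{lemma:2approximation}): every family over $T$ with determinant $\xi$ is pulled back from a family over a smooth scheme $Q$ (built from $\Div_X^{r,d}(D)$ for $\deg D\gg 0$, corrected to fixed determinant via a fibre product with the Jacobian) whose non-stable locus has codimension at least $2$; over $Q$ the two induced bundles agree on $Q^{\svb}$, and normality plus the codimension bound give uniqueness of the extension, hence agreement everywhere, which then pulls back to $T$. Without this lemma, or an equivalent device, your argument does not close.
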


Let us consider for a moment the moduli stack of vector bundles
$\SM(X,r)$ where we don't fix the determinant (the connected
components of $\SM(X,r)$ are $\SM(X,r,d)$, indexed by the degree
$d\,\in\, \ZZ$ of vector bundles). A line bundle $\SL$ on $\SM(X,r)$ produces an
automorphism of $\SM(X,r)$ as follows: for each morphism $f\,:\,T\,\too\,
\SM(X,r)$, corresponding to a vector bundle $\SE_f$ on $X\times T$, 
we associate the morphism $f'\,:\, T\,\too \SM(X,r)$ corresponding to the
vector bundle $\SE'\,=\,\SE\otimes \pi_T^* f^*\SL$, where $\pi_T\, :\, X\times T\,
\longrightarrow\, T$ is the natural projection. It is easy to check
that this gives an automorphism of $\SM(X,r)$. Now let us return to the moduli
stack $\SM(X,r,\xi)$, parametrizing vector bundles $\SE$ on $X\times T$
together with an isomorphism $\det\SE\,\cong\, \pi_X^*\xi$. In the previous
construction we have $\det \SE'\,=\,\det \SE\otimes \pi_T^*f^*\SL^r$.
Therefore, the line bundle $\SL$ induces an automorphism on
$\SM(X,r,\xi)$ only if $\SL^r$ is trivial. However, the Picard group
of $\SM(X,r,\xi)$ is torsion-free, so $\SL\,\cong\,\SO_X$ if $\SL^r$ is trivial, and
consequently such an automorphism is trivial.

The idea of the proof of Theorem \ref{thm:intro} is to first show that an automorphism of
$\SM(X,r,\xi)$, which is not given by a combination of the three
``basic transformations'', has to be given by the previous
construction, and hence it has to be isomorphic to the trivial automorphism.

For this, following the ideas in \cite[\S~2]{ABGM}, we use ``beyond
GIT'' techniques (see \cite{Al,He,HL,AlHLHe}) to identify the locus
of semistable vector bundles inside the moduli stack $\SM(X,r,\xi)$.
In other words, it is shown that any automorphism of $\SM(X,r,\xi)$
preserves the open substack of semistable vector bundles and produces a
commutative diagram
$$
\xymatrix{
\SM^{\ssvb}(X,r,\xi) \ar[rr]^{\Psi^{\ssvb}} \ar[d] && \SM^{\ssvb}(X',r',\xi') \ar[d]\\
M^{\ssvb}(X,r,\xi) \ar[rr]^{\psi} && M^{\ssvb}(X',r',\xi').
}
$$
As mentioned before, it is known that the automorphism group of the moduli space is given
by a combination of the three ``basic transformations'' 
(cf. \cite{KP95,BGM13,AB}), so composing with such a combination, it
may be assumed that $\psi$ is actually the identity map. The identity map preserves
the open subscheme of stable bundles, and then we have a diagram
$$
\xymatrix{
\SM^{\svb}(X,r,\xi) \ar[rr]^{\Psi^{\svb}} \ar[d] && \SM^{\svb}(X,r,\xi) \ar[d]\\
M^{\svb}(X,r,\xi) \ar@{=}[rr] && M^{\svb}(X,r,\xi).
}
$$
We then prove (see Lemma \ref{lemma:tensorLineBundle}) that $\Psi^{\svb}$ has to be
trivial. Finally it is shown that an automorphism of $\SM^{\svb}(X,r,\xi)$
extends uniquely to $\SM(X,r,\xi)$.

Notice that, as the moduli stack of vector bundles is not separated,
the uniqueness of the extension does not follow automatically from the
fact that the open substack of stable bundles is dense. To prove the
desired extension property, we prove that stable vector bundles are
not only dense in the moduli stack, but that there exists an
additional ``2-dimensional density'' property as follows 
(see Corollary \ref{cor:2approximation} and see 
Lemma \ref{lemma:2approximation} for a version of 
this results for families of vector bundles).

\begin{theorem}
\label{thm:2approxIntro}
For any vector bundle $E$ on a smooth projective curve of genus $g\,\ge\, 2$, there exists a family of vector bundles $\SE$
on $X\times T$ for a certain connected smooth 2-dimensional scheme $T$ and a point $t_0\,\in\, T$, such
that $\SE\big\vert_{X\times \{t\}}$ is stable for each $t\,\ne\, t_0$ and $\SE\big\vert_{X\times \{t_0\}}\,\cong\, E$.
\end{theorem}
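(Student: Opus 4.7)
The plan is to exhibit $T$ as a smooth $2$-dimensional slice through $[E]$ in a smooth atlas of the moduli stack $\SM(X,r,\deg E)$, chosen generically so that the slice meets the closed substack of non-stable bundles only at $t_0$.

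Since $X$ is a curve, $\Ext^2(E,E)\,=\,0$, so $\SM(X,r,\deg E)$ is a smooth Artin stack at $[E]$ of dimension $r^2(g-1)$. Fix a smooth atlas $\pi\,:\,U\,\too\,\SM(X,r,\deg E)$, obtained for example from a Quot-scheme presentation of the stack, with a point $u_0\,\in\,U$ satisfying $\pi(u_0)\,=\,[E]$; let $\SF\,\too\,X\times U$ denote the pull-back of the universal family, so that $\SF|_{X\times\{u_0\}}\,\cong\,E$. The case $r\,=\,1$ is trivial (every line bundle is stable, so the constant family on any smooth surface suffices), so I assume $r\,\ge\,2$.

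The key estimate is that the non-stable locus $\SM^{\op{ns}}\,\subset\,\SM(X,r,\deg E)$ has codimension at least $2$, uniformly at every point. A direct stacky dimension count shows that every polystable (but not stable) stratum $\bigoplus L_i^{\oplus m_i}$, with $L_i$ stable of rank $\rho_i$, has codimension $(g-1)(r^2-\sum \rho_i^2) + \sum m_i^2 - k$ in $\SM(X,r,\deg E)$; this quantity is minimised at $2(g-1)(r-1)\,\ge\,2$ by the splitting $k\,=\,2$, $m_1\,=\,m_2\,=\,1$, $\rho_1\,=\,1$, $\rho_2\,=\,r-1$. For the unstable strata, the Atiyah-Bott formula gives the codimension of the Shatz stratum of HN type $(n_1,d_1;\ldots;n_s,d_s)$ (with $\mu_1\,>\,\cdots\,>\,\mu_s$) as $\sum_{i<j}[n_jd_i - n_id_j + n_in_j(g-1)]$, in which each summand is bounded below by $1+(g-1)\,=\,g\,\ge\,2$. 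Since $\pi$ is smooth, pulling back preserves codimensions, so $U^{\op{ns}}\,:=\,\pi^{-1}(\SM^{\op{ns}})$ has codimension at least $2$ in $U$ at $u_0$ (and everywhere).

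Now choose a smooth $2$-dimensional locally closed subscheme $T\,\subset\,U$ through $u_0$ whose tangent space $T_{u_0}T\,\subset\,T_{u_0}U$ is in sufficiently general position with respect to the tangent cones of the (finitely many) irreducible components of $U^{\op{ns}}$ passing through $u_0$. Because each such component has codimension at least $2$, this genericity forces $T\cap U^{\op{ns}}$ to be zero-dimensional at $u_0$; after shrinking $T$ to a connected open neighbourhood of $u_0$ one disposes of the components of $U^{\op{ns}}$ not passing through $u_0$, obtaining $T\cap U^{\op{ns}}\,=\,\{u_0\}$ set-theoretically. Setting $\SE\,:=\,\SF|_{X\times T}$ yields the required family: $\SE|_{X\times\{u_0\}}\,\cong\,E$ by construction, and $\SE|_{X\times\{t\}}$ is stable for every $t\,\in\,T\setminus\{u_0\}$. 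The principal technical obstacle is the codimension bound in the borderline case $g\,=\,2$, $r\,=\,2$, where both the polystable stratum and the minimal unstable Shatz stratum have codimension exactly $2$; in this case the tangent direction of $T$ at $u_0$ must be chosen so as to be simultaneously transverse to two codimension-$2$ tangent cones, which a standard Bertini-type argument permits.
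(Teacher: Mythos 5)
Your overall strategy --- pass to a smooth atlas of the stack, prove the non-stable locus has codimension at least $2$, and cut with a generic $2$-dimensional slice through $[E]$ --- is exactly the paper's (which uses the ind-scheme of divisors $\op{Div}_X^{r,d}(D)$, i.e.\ a Quot scheme, as the atlas; see Lemma \ref{lemma:codimDiv} and Corollary \ref{cor:2approximation}). The gap is in your codimension estimate for the semistable part of the non-stable locus. You decompose that locus into unstable Shatz strata plus \emph{polystable} strata, but the non-stable locus also contains all strictly semistable bundles that are \emph{not} polystable, namely the non-split iterated extensions of stable bundles of equal slope, and these are not in the closure of the polystable locus (the degeneration goes the other way: the polystable bundle is in the closure of the orbit of the extension). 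Your formula $(g-1)(r^2-\sum\rho_i^2)+\sum m_i^2-k$ correctly computes the codimension of the split (polystable) bundles only, and that locus is not dense in the strictly semistable locus. Concretely, for $r=2$ and $2\mid d$, the stack of extensions $0\to L_1\to E\to L_2\to 0$ with $\deg L_i=d/2$ has dimension $3(g-1)$ and maps generically finitely onto the strictly semistable locus inside the $4(g-1)$-dimensional stack $\SM(X,2,d)$; so the strictly semistable locus has codimension $g-1$, not your $2(g-1)(r-1)$. For $g=2$ this is codimension $1$, and no choice of tangent direction can make a $2$-dimensional slice meet a divisor through $u_0$ in a single point, so your Bertini argument for the ``borderline case'' cannot work. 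Indeed the statement is false for $g=2$: for $E=\SO_X\oplus\SO_X$ on a genus-$2$ curve, any family on $X\times T$ with $\SE|_{X\times\{t_0\}}\cong E$ pulls back the codimension-one strictly semistable divisor to a subscheme of $T$ of dimension at least $1$ through $t_0$. (The paper's own proof, Corollary \ref{cor:2approximation} resting on Lemma \ref{lemma:codimDiv}, assumes $g\geq 3$; the hypothesis $g\geq 2$ in the introductory statement is inconsistent with the proof actually supplied.)

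Even granting $g\geq 3$, where the conclusion ``codimension $\geq 2$'' is true, your argument as written does not establish it, because the stratum that attains the minimal codimension $g-1$ is precisely the one you omitted. You would need to supplement the polystable count with a dimension count of the spaces of iterated extensions (using that $\hom$ between non-isomorphic stable bundles of equal slope vanishes and Riemann--Roch for the $\op{Ext}^1$ groups), which is what the paper does in the proof of Lemma \ref{lemma:codimDiv} following the computation of \cite[Lemma 2.3]{BGM10}. The remaining steps of your proof (smoothness of the atlas, preservation of codimension under the smooth cover, the generic two-plane slice, and shrinking $T$ to a connected neighbourhood) are fine and match the paper.
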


This new notion of ``2-dimensional density'' allows us to prove that maps between the moduli substacks of stable 
vector bundles extend uniquely to isomorphisms between the entire stacks, thus finishing the proof.

The paper is structured as follows. In Section \ref{section:groupoids} we 
recall some generalities on the moduli stack and the moduli 
scheme of vector bundles with fixed determinant
and show that the 
category of maps from $\SM$ to $\SM$ is equivalent to the category of vector bundles on $X\times \SM$. 
Section \ref{section:basicTransformations} describes the basic transformations of families of 
vector bundles. Section \ref{section:beyondGIT} shows the recovery of the moduli substack and moduli scheme 
of semistable vector bundles from the isomorphism class of the stack of vector bundles. Section 
\ref{section:extension} includes the proof of the ``2-dimensional density'' property and the proof of the 
main result, and in Section \ref{section:rational} we give an example to show that, in the case of a rational curve, the automorphism groups of the moduli space and stack are quite different.

\section{groupoids}\label{section:groupoids}

Let us start by describing some general theoretical results on moduli stacks and, concretely, on moduli stacks of vector bundles on a curve, which will be useful through the work. Through this work, let $X$ be a smooth irreducible complex projective curve of genus at least 2. Let $\xi$ be a line bundle on $X$ and let $\SM(X,r,\xi)$ denote the moduli stack of vector bundles on $X$ with fixed determinant $\xi$. Concretely, for any scheme $T$, the groupoid of morphisms $f\,:\,T\,\longrightarrow\, \SM\,=\,\SM(X,r,\xi)$
to the moduli stack is equivalent to the groupoid of pairs $(E,\,\beta)$, 
where $E$ is a vector bundle on $X\times T$ and $\beta$ is an isomorphism
between $p_X^*\xi$ and $\det(E)$. In this section we prove that this continues to
hold even if $T$ is replaced by an algebraic stack $\ST$. 
The proof is technical, nevertheless
the idea is easy: given an atlas $U\,\longrightarrow\, \ST$, a morphism
$f\,:\,\ST\,\longrightarrow\,\SM$ can be described by a morphism $f_U\,:\,U\,\longrightarrow\, \SM$ together 
with some descent data. Since $U$ is a scheme, the morphism $f_U$ corresponds to
a vector bundle $\SE_U$ on $U\times \SM$, and the descent data for
$f_U$ gives descent data for $\SE_U$, thus obtaining a vector bundle
$\SE$ on $T\times \SM$. Therefore, setting $\ST\,=\,\SM$, 
we obtain an equivalence between
the category of automorphisms of $\SM$ and the category of vector
bundles on $X\times \SM$.

A \emph{category over} $\SC$ is a functor of categories $p\,:\,\SX \,\longrightarrow\, \SC$.
Given an object $B$ in $\SC$, we define the fiber category $\SX(B)$ as the
subcategory of $\SX$ with objects $u$ such that $p(u)\,=\,B$ (this is actual equality,
not just isomorphism) and morphisms $\varphi$ such that $p(\varphi)\,=\,id_B$.

A morphism $\varphi\,:\,u_2\,\longrightarrow\, u_1$ in a category $\SX$ over $\SC$ 
is called Cartesian 
if for any diagram
$$
\xymatrix{
{u_3} \ar@{|->}[d] \ar@(ru,lu)[rr]^{\psi} \ar@{-->}[r]^{\gamma} &
{u_2} \ar@{|->}[d]\ar[r]^{\varphi} & {u_1} \ar@{|->}[d] \\
{B_3} \ar[r]^{g} & {B_2} \ar[r]^{f} & {B_1 \; ,} \\
}
$$
where $p(u_i)\,=\,B_i$, $p(\varphi)\,=\,f$, $p(\psi)\,=\,f\circ g$,
there exists a unique morphism $\gamma\,:\,u_3 \,\longrightarrow\, u_2$ with
$p(\gamma)\,=\,g$ and $\psi\,=\,\varphi\circ
\gamma$.

If $\varphi$ is Cartesian, then we 
say that $u_2$, together with the morphism $\varphi$, is 
a pullback of $u_1$ along $f$. If a pullback exists, then it is unique
up to a unique isomorphism, i.e., if $u'_2$ is another 
pullback, then there exists a unique isomorphism $\lambda\,:\,u_2\,\longrightarrow\, u'_2$
with $p(\lambda)\,=\,\id_{B_2}$ and $\varphi'\circ \lambda\,=\,\varphi$.

A \emph{fibered category} is a category $p\,:\,\SX\,\longrightarrow\, \SC$ over $\SC$, 
such that at least one pullback exists for each object $u_1$ in $\SX(B_1)$ and 
morphism $f\,:\,B_2\,\longrightarrow\, B_1$. A \emph{cleavage} (also called
\emph{splitting}) of a fibered category is
a choice of a unique pullback for every object $u_1$ and morphism $f$
(see \cite[\S~3.3]{Ol16}).

Denote by $\op{Cat}^{\op{cart}}_{/\SC}$ the 2-category of 
fibered categories over $\SC$, defined as follows:

\begin{definition}[{\cite[Section 0012]{HL25}, \cite[Definition 3.1.3]{Ol16}}]\label{def:mapbetweenstacks}
Let $\SX$ and $\SY$ be categories fibered over a category $\SC$.
The category $\op{Map}_{\op{Cat}^{\op{cart}}_{/\SC}}(\SX,\,\SY)$ of morphisms of fibered categories 
between $\SX$ and $\SY$ is defined as follows. 
\begin{itemize}
\item An object is a functor $f\,:\,\SX\,\longrightarrow\, \SY$ which preserves Cartesian morphisms.
\item A morphism between functors $f$ and $g$ is a natural transformation
$\eta\,:\,f\,\Rightarrow\, g$ such that, for every $\xi\,\in \,\SX$, the associated
morphism $\eta_\xi\,:\,f(\xi)\,\longrightarrow\, g(\xi)$ lies over the identity morphism.
\end{itemize}
\end{definition}

A \emph{category fibered in groupoids} is a fibered category such
that the fiber $\SX(B)$ is a groupoid for all objects $B$ in $\SC$,
i.e., all arrows mapping to the identity morphism are isomorphism.
We denote by $\op{Cat}^{\op{cart},\cong}_{/\SC} \ \subset\ 
\op{Cat}^{\op{cart}}_{/\SC}$ the full 2-subcategory of categories fibered
in groupoids.

Let $\SX$ be an algebraic stack over the fppf site $\SC\,=\,\Sch_{/S}$ with 
schematic diagonal
(i.e., the diagonal $\Delta\,:\,\SX\,\longrightarrow\, \SX\times_\SC\SX$ is representable
by a scheme). In this article we are interested in the case $S\,=\,\Spec \CC$, 
but the results of this section are valid over any scheme $S$.
Let $u\,:\,U_0\,\longrightarrow\, \SX$ be an atlas. We get a diagram of schemes (they
are schemes because the diagonal is representable by a scheme)
\begin{equation}\label{eq:simplicial}
\xymatrix{
{U_2:=U_0\times_\SX U_0 \times_\SX U_0} \ar@<2ex>[r]^-{p_0} \ar[r]^-{p_1} \ar@<-2ex>[r]^-{p_2} &
{U_1:=U_0\times_\SX U_0} \ar@<1ex>[r]^-{q_0} \ar@<-1ex>[r]^-{q_1} 
&
{U_0} 
},
\end{equation}
where the arrows 
are given by the natural projections.
Note that the points\footnote{A point in an $S$-scheme $V$ is 
a morphism of $S$-schemes $B\,\longrightarrow\, V$. } of $U_1$ are triples $(x_0,\,f_1,\,x_1)$, where
$x_i$ are points in $U_0$ and $f_1\,:\,u(x_1)\,\longrightarrow\, u(x_0)$ is a morphism in
the category $\SX$ between the corresponding objects in $\SX$. 
The images of $q_0$ and $q_1$ are, respectively, $x_1$ and $x_0$ 
(i.e., $q_i$ forgets the point $x_i$), so
these projections can be thought of as ``source'' and ``target''.
The points of $U_2$ are tuples $(x_0,\,f_1,\,x_1,\,f_2,\,x_2)$, where $x_i$ 
are points in $U_0$, and $f_i\,:\,u(x_i)\,\longrightarrow\, u(x_{i-1})$ are morphisms in 
the category $\SX$. Each of the three arrows from $U_2$ to $U_1$ 
can be thought of as forgetting one of the points, so the images of
$p_0$, $p_1$ and $p_2$ are,
respectively, $(x_1,\,f_2,\,x_2)$, $(x_0,\, f_1\circ f_2,\, x_2)$ 
and $(x_0,\,f_1,\,x_1)$. Note that the
morphism $p_1$ can be thought of as a ``composition''.

The morphisms satisfy the following simplicial identities:
\begin{eqnarray*}
q_ip_j&\,=\,& q_{j-1}p_i \quad \text{if $i\,<\, j$}\\
q_ip_j&\,=\,& q_jp_{i+1} \quad \text{if $i\,\geq \,j$.}
\end{eqnarray*}
Given a morphism of stacks $F\,:\,\SX\,\longrightarrow\, \SY$, the composition with the
atlas $u\,:\,U_0\,\longrightarrow\, \SX$ gives a morphism 
$$
\xi\ =\ F\circ u\ :\ U_0\ \too\ \SY,
$$ 
which we can interpret
as an object in $\SY(U_0)$. Pullbacks by $q_0$ and $q_1$ give two 
objects in $\SY(U_1)$. Since $\xi$ factors through $\SX$, there is a 
natural isomorphism between them
$\sigma\,:\,q_0^*\xi \,\longrightarrow\, q_1^*\xi$ which we think of as gluing data. 
The simplicial identities imply that the glueing data satisfy the
following cocycle condition:
$$
p_0^* \sigma \circ (p_1^*\sigma)^{-1} \circ p_2^* \sigma \ =\ \id .
$$
Summing up, a morphism of stacks from $\SX$ to $\SY$ 
produces a morphism $\xi$ from 
the atlas $U_0$ to $\SY$ together with gluing data $\sigma$ satisfying the
cocycle condition. The following proposition says that the converse holds.

\begin{proposition}[{\cite[Section 005J]{HL25}}]\label{prop:mapxy}
Let $\SY$ be a stack over $\SC\,=\,\Sch_{/S}$, and let $\SX$ be an algebraic
stack with a scheme atlas $U_0\to \SX$ and schematic diagonal.
The category $\op{Map}_{\op{Cat}^{\op{cart}}_{/\SC}}(\SX,\,\SY)$ 
is equivalent to the category $\op{Desc}(U_0\to \SX,\,\SY)$
of descent data, defined as follows.
\begin{itemize}
\item An object is a pair $(\xi,\,\sigma)$, where $\xi$ is an object in $\SY(U_0)$ and $\sigma
\,:\,q_0^*(\xi)\,\longrightarrow\, q_1^*(\xi)$ is an isomorphism in $\SY(U_1)$
which satisfies the cocycle condition 
$$
p_0^* \sigma \circ (p_1^*\sigma)^{-1} \circ p_2^*\sigma\ =\ \id .
$$

\item A morphism $(\xi,\,\sigma)\,\longrightarrow\, (\xi',\,\sigma')$ is a morphism 
$\eta\,:\,\xi\,\longrightarrow\, \xi'$ in $\SY(U_0)$ such that 
$q_1^*(\eta)\circ\sigma\,=\,\sigma'\circ q_0^*(\eta)$ in $\SY(U_1)$.
\end{itemize}
\end{proposition}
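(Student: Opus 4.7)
The plan is to exhibit explicit quasi-inverse functors between $\op{Map}_{\op{Cat}^{\op{cart}}_{/\SC}}(\SX,\SY)$ and $\op{Desc}(U_0\to\SX,\SY)$, with the main work going into the reverse direction.

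\textbf{Forward direction.} Given $F:\SX\to\SY$, set $\xi\,:=\,F\circ u$, viewed as an object of $\SY(U_0)$. The two projections $q_0,q_1:U_1\to U_0$ come equipped with a canonical 2-isomorphism $u\circ q_0\cong u\circ q_1$ in $\SX(U_1)$, obtained from the definition of $U_1=U_0\times_\SX U_0$. Applying $F$ and fixing cleavages of $\SX$ and $\SY$, one obtains an isomorphism $\sigma:q_0^*\xi\to q_1^*\xi$ in $\SY(U_1)$, using that $F$ preserves Cartesian morphisms. The cocycle condition is obtained by applying $F$ to the analogous canonical identifications on $U_2$ and invoking the simplicial identities recorded after \eqref{eq:simplicial}. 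A natural transformation $\eta:F\Rightarrow F'$ evaluated on $u$ gives a morphism $\xi\to\xi'$ in $\SY(U_0)$ that automatically commutes with the gluing data.

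\textbf{Reverse direction.} Given $(\xi,\sigma)$, I would construct $F:\SX\to\SY$ as follows. For each scheme $T$ and object $t\in\SX(T)$, form $V_0\,:=\,U_0\times_\SX T$; since $u$ is an atlas, $V_0\to T$ is smooth and surjective, hence an fppf cover. Let $\xi_t\in\SY(V_0)$ be the pullback of $\xi$ along $V_0\to U_0$. The canonical identifications $V_0\times_T V_0\cong U_1\times_\SX T$ and $V_0\times_T V_0\times_T V_0\cong U_2\times_\SX T$ transport $\sigma$ to a gluing datum on $\xi_t$ over $V_0\to T$ that still satisfies the cocycle condition. Since $\SY$ is an fppf stack, this datum is effective, so $\xi_t$ descends uniquely to $F(t)\in\SY(T)$. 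For a morphism $\varphi:t'\to t$ in $\SX$, pulling back $\varphi$ to the associated fiber products induces a morphism $\xi_{t'}\to\xi_t$ compatible with the gluing data; it descends to $F(\varphi)$ by the sheaf property for morphisms in $\SY$, and Cartesianness is preserved because descent is compatible with base change.

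\textbf{Quasi-inverse property.} Running the reverse construction on the descent datum produced by $F$ recovers $F$ up to canonical natural isomorphism: for $t:T\to\SX$, the cover $V_0\to T$ is the base-change of $u:U_0\to\SX$ along $t$, so the pullback-and-descend of $F\circ u$ along $V_0\to T$ coincides canonically with $F(t)$ because $F$ preserves the relevant Cartesian squares. Conversely, starting from $(\xi,\sigma)$ and restricting the constructed $F$ to $u\in\SX(U_0)$ returns $\xi$ on $U_0$ and $\sigma$ on $U_1$ directly from the defining property of effective descent. The hardest step will be the bookkeeping: one must consistently track cleavages and canonical 2-isomorphisms across several fiber products and verify that the descent of morphisms respects composition. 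Once the sheaf property for morphisms and effective descent for objects of $\SY$ are granted, the entire argument is a systematic unwinding of the simplicial diagram \eqref{eq:simplicial}.
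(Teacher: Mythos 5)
Your proposal is correct in outline, and it reaches the conclusion by a genuinely more explicit route than the paper. The paper's (sketched) proof identifies $\op{Desc}(U_0\to\SX,\SY)$ with $\op{Map}_{\op{Cat}^{\op{cart}}_{/\SC}}(U_\bullet,\SY)$ for the prestack $U_\bullet$ built from the simplicial diagram \eqref{eq:simplicial}, and then invokes the universal property of the stackification $U_\bullet\to BU_\bullet$ together with the equivalence $BU_\bullet\simeq\SX$, citing \cite{HL25} for the details. You instead unwind what that black box does: for each $t\in\SX(T)$ you base-change the atlas to the fppf cover $V_0=U_0\times_\SX T\to T$ (a scheme precisely because the diagonal of $\SX$ is schematic --- you should say this explicitly, since it is the hypothesis that makes your construction stay inside $\Sch_{/S}$), transport $(\xi,\sigma)$ through the identifications $V_0\times_T V_0\cong U_1\times_\SX T$ and $V_0\times_T V_0\times_T V_0\cong U_2\times_\SX T$, and apply effectivity of descent in the fppf stack $\SY$. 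This buys a self-contained argument that uses only classical descent, at the cost of the coherence bookkeeping you acknowledge (consistent cleavages, checking that the descended morphisms compose and that $F(f^*t)\cong f^*F(t)$ canonically, which is what makes $F$ preserve Cartesian arrows). The one step you wave at too quickly is the second half of the quasi-inverse check: evaluating your constructed $F$ on the tautological object of $\SX(U_0)$ produces the descent of $q_0^*\xi$ along $q_1:U_1\to U_0$, and recovering $\xi$ itself requires using the diagonal section $U_0\to U_1$ and the cocycle condition, not merely ``the defining property of effective descent.'' That is a routine but nonvacuous verification; with it supplied, your argument is a complete and valid alternative to the paper's appeal to stackification.
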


\begin{proof}
We give a sketch. For more details, 
see \cite[Section 004Y]{HL25}.

First consider $U_\bullet$ as a fibered category, whose objects and arrows over $T$ are
$U_0(T)$ and $U_1(T)$ respectively. Then the category of descent data is
the category of morphisms 
$$
\op{Desc}(U_0 \to \SX,\,\SY)
\ =\ \op{Map}_{\op{Cat}^{\op{cart}}_{/\SC}}(U_\bullet,\,\SY).
$$
The atlas morphism $U_0\,\longrightarrow\, \SX$ induces
a functor of categories $U_\bullet\,\longrightarrow\, \SX$.
The category $U_\bullet$ is not a stack (because it is
only a ``presheaf'' but not a ``sheaf'', i.e., it is a 
prestack but not a stack). Then a stackification 
$U_\bullet\,\longrightarrow\, BU_\bullet$ is constructed
(see \cite[Definition 0055]{HL25}); the
canonical morphism $BU_\bullet\,\longrightarrow\, \SX$ is an equivalence
(see \cite[Proposition 005B]{HL25}), and the composition of morphisms
$$U_\bullet\ \longrightarrow\ BU_\bullet\ \longrightarrow\ \SX$$ induces the
equivalence between the categories 
$\op{Map}_{\op{Cat}^{\op{cart}}_{/\SC}}(\SX,\,\SY)$ and $\op{Desc}(U\to\SX,\,\SY)$.
\end{proof}

Let $\SX$ be an algebraic stack over $\SC\,=\,\Sch_{/S}$ with
schematic diagonal and consider the diagram of schemes in
\eqref{eq:simplicial}. There are different ways of defining the
category of quasi-coherent sheaves over $\SX$. We will sketch the
definition in \cite[Section 005Q]{HL25}.

Define the fibered category $p\,:\, \op{QCoh_{/S}}\,\longrightarrow\, \Sch_{/S}$ as the
category whose objects are pairs $(X,\,E)$, where $X$ is an $S$-scheme
and $E$ is a quasi-coherent sheaf on $X$, and a morphism of it to $(Y,\,F)$
is a pair $(f,\,\psi)$, where $f\,:\,X\,\longrightarrow\, Y$ is a morphism of $S$-schemes 
and $\psi\,:\,f^*E \,\longrightarrow\, F$ is a homomorphism 
(see \cite[Definition 0016]{HL25}). Then define
$$
\op{QCoh}(\SX)\ :=\ \op{Map}_{\op{Cat}^{\op{cart}}_{/\SC}}(\SX,\,\op{QCoh_{/S}}).
$$

Given a quasi-coherent sheaf on $\SE$ on $\SX$ 
and an atlas $u\,:\,U_0\,\longrightarrow\, \SX$,
we obtain a sheaf $E\,=\,u^*\SE$ on $U_0$ and an isomorphism
$\sigma\,:\,q_0^*E\,\longrightarrow\, q_1^*E$ that satisfies the cocycle condition.
The following proposition says that the converse also holds.

\begin{proposition}[{\cite[Section 005R]{HL25}}]\label{prop:qcohx}
Let $\SX$ be an algebraic
stack with a scheme atlas $U_0\,\longrightarrow\, \SX$ and schematic diagonal.
The category $\op{QCoh}(\SX)$ 
is equivalent to the category $\op{Desc}(U_0\to \SX)$
of descent data, defined as follows:
\begin{itemize}
\item An object is a pair $(\xi,\,\sigma)$, where $\xi$ is a
quasi-coherent sheaf on $U_0$ and $\sigma\,:\,q_0^*(\xi)\,\longrightarrow\, q_1^*(\xi)$ 
is an isomorphism satisfying the cocycle condition 
$$
p_0^* \sigma \circ (p_1^*\sigma)^{-1} \circ p_2^*\sigma\ =\ \id .
$$

\item A morphism $(\xi,\,\sigma)\,\longrightarrow\, (\xi',\,\sigma')$ is a homomorphism 
$\eta\,:\,\xi\,\longrightarrow\, \xi'$ of quasi-coherent sheaves on $U_0$ such that 
$q_1^*(\eta)\circ\sigma\,=\,\sigma'\circ q_0^*(\eta)$ in $\SY(U_1)$.
\end{itemize}
\end{proposition}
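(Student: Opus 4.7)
The plan is to deduce this proposition as a direct instance of Proposition \ref{prop:mapxy}, applied to the specific target stack $\SY = \op{QCoh}_{/S}$. By the definition just given,
$$\op{QCoh}(\SX) \ =\ \op{Map}_{\op{Cat}^{\op{cart}}_{/\SC}}(\SX,\,\op{QCoh}_{/S}),$$
so if Proposition \ref{prop:mapxy} is available for this target, then $\op{QCoh}(\SX)$ is already identified with $\op{Desc}(U_0\to \SX,\,\op{QCoh}_{/S})$. It only remains to recognize that $\op{Desc}(U_0\to \SX,\,\op{QCoh}_{/S})$, as defined in the statement of Proposition \ref{prop:mapxy}, coincides on the nose with the category $\op{Desc}(U_0\to \SX)$ described in the current statement.

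To apply Proposition \ref{prop:mapxy} one has to check that $\op{QCoh}_{/S}$ is a stack over the fppf site $\Sch_{/S}$. This is the classical theorem on fppf descent for quasi-coherent sheaves: given a faithfully flat quasi-compact morphism $T'\to T$, a quasi-coherent sheaf on $T$ is equivalent to a quasi-coherent sheaf on $T'$ together with a descent isomorphism on $T'\times_T T'$ satisfying the cocycle condition on $T'\times_T T'\times_T T'$. I would simply quote this from the Stacks Project / \cite{HL25}, since the atlas morphism $U_0\to \SX$ is smooth (hence fppf) and the fibered products $U_1$, $U_2$ are indeed schemes by the schematic-diagonal hypothesis.

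Once $\op{QCoh}_{/S}$ is known to be a stack, unpacking the definitions is automatic. An object of $\op{QCoh}_{/S}(U_0)$ is, by construction, a quasi-coherent sheaf $\xi$ on $U_0$; a morphism $q_0^*\xi \to q_1^*\xi$ in $\op{QCoh}_{/S}(U_1)$ lying over the identity of $U_1$ is by definition an $\SO_{U_1}$-linear homomorphism of quasi-coherent sheaves, and the condition that it be an isomorphism together with the cocycle condition on $U_2$ match those stated in the proposition. Similarly, morphisms of descent data in the sense of Proposition \ref{prop:mapxy} are morphisms $\eta\,:\,\xi\,\to\,\xi'$ in $\op{QCoh}_{/S}(U_0)$ compatible with the gluing isomorphisms, which is precisely a homomorphism of quasi-coherent sheaves on $U_0$ satisfying $q_1^*(\eta)\circ \sigma = \sigma'\circ q_0^*(\eta)$.

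The main (and only) nontrivial point is thus the fppf descent property for quasi-coherent sheaves, which makes $\op{QCoh}_{/S}$ a stack over $\Sch_{/S}$; everything else is a purely formal rewriting of Proposition \ref{prop:mapxy} in the case of the target $\SY = \op{QCoh}_{/S}$. I would therefore present the proof as a one-line invocation of Proposition \ref{prop:mapxy} together with a citation to \cite{HL25} for fppf descent.
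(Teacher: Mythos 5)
Your proof is correct and follows essentially the same route as the paper: the paper's own argument simply re-runs the proof of Proposition \ref{prop:mapxy} (via $U_\bullet \to BU_\bullet \to \SX$ and the universal property of stackification) with target $\op{QCoh}_{/S}$, which is exactly your direct invocation of that proposition, and in both cases the only substantive input is fppf descent for quasi-coherent sheaves, making $\op{QCoh}_{/S}$ a stack. The one point worth flagging is that $\op{QCoh}_{/S}$ is a stack in categories rather than in groupoids, so one should note that Proposition \ref{prop:mapxy} and the stackification argument behind it apply in that generality --- presumably why the paper says the proof is ``similar to'' rather than ``an instance of'' that proposition.
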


\begin{proof}
The proof is similar to the proof of Proposition \ref{prop:mapxy}. We
first interpret the descent datum as a morphism from a category $U_\bullet$
to $\op{QCoh_{/S}}$. We then construct the stack $BU_\bullet$, which
is a category equivalent to $\SX$, and use \cite[Proposition 005B]{HL25}
to conclude that the category of descent datum is equivalent
to $\op{QCoh}(\SX)$. See \cite[Section 005R]{HL25} for more details.
\end{proof}

Let us define the category $\op{VBun}_{r,\xi}(X\times \SM)$.
The objects are pairs $(E,\,\beta)$ where 
$E$ is a vector bundle on $X\times \SM$ and $\beta:\det(E)\to p_X^*\xi$ is an isomorphism. A
morphism $\eta\,:\,(E,\,\beta) \,\Rightarrow\, (E',\,\beta')$ is an
isomorphism $f:E\cong E'$ such that $\beta'\circ\det(f)\,=\,\beta$.
We can also describe this category using descend data as follows.

\begin{corollary}\label{cor:vbunxm}
Let $U_0\to \SM(X,r,\xi)$ be an atlas, and consider the induced atlas on the
product $U_0\times X\to X\times \SM$. The category $\op{VBun}_{r,\xi}(X\times \SM)$ is equivalent to the category 
$\op{Desc}^{\rm VBun}_{r,\xi}(X\times U_0\to X\times \SM)$ defined as follows
\begin{itemize}
\item An object is a tuple $(\xi,\,\beta ,\,\sigma)$, where $\xi$ is a
rank $r$ vector bundle on $X\times U_0$, $\beta$ is an isomorphism
between $\det(\xi)$ and $p_X^*\xi$, and 
$\sigma\,:\,q_0^*(\xi)\,\longrightarrow\, q_1^*(\xi)$ 
is an isomorphism respecting the isomorphism $\beta$ and 
satisfying the cocycle condition 
$$
p_0^* \sigma \circ (p_1^*\sigma)^{-1} \circ p_2^*\sigma\ =\ \id .
$$

\item A morphism $(\xi,\,\beta,\,\sigma)\,\longrightarrow\, (\xi',\,\beta',\sigma')$ is an isomorphism 
$\eta\,:\,\xi\,\longrightarrow\, \xi'$ of vector bundles on $U_0$ (compatible with the isomorphisms $\beta$ and $\beta'$) such that 
$q_1^*(\eta)\circ\sigma\,=\,\sigma'\circ q_0^*(\eta)$ in $\SY(U_1)$.
\end{itemize}

\end{corollary}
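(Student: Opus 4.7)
The plan is to deduce this corollary directly from Proposition \ref{prop:qcohx} applied to the algebraic stack $X\times\SM$ together with the induced atlas $X\times U_0\to X\times\SM$, via two formal refinements: cutting $\op{QCoh}$ down to the groupoid of rank $r$ vector bundles, and then adding the determinant datum.

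First I would observe that $X\times \SM$ inherits a schematic diagonal from $\SM$, and that the simplicial scheme attached to the new atlas $X\times U_0\to X\times\SM$ is simply the product $X\times U_\bullet$ of \eqref{eq:simplicial} with $X$, whose source/target and composition maps are $\id_X\times q_i$ and $\id_X\times p_j$. Applying Proposition \ref{prop:qcohx} in this setting yields directly an equivalence between $\op{QCoh}(X\times \SM)$ and the category of descent data $(E,\sigma)$, where $E$ is quasi-coherent on $X\times U_0$ and $\sigma:q_0^*E\to q_1^*E$ satisfies the cocycle condition on $X\times U_1$.

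Next, since $X\times U_0 \to X\times \SM$ is faithfully flat, a quasi-coherent sheaf on $X\times\SM$ is locally free of rank $r$ if and only if its pullback to the atlas is, and a morphism of quasi-coherent sheaves is an isomorphism if and only if its pullback to the atlas is. Hence the equivalence restricts to an equivalence between the groupoid of rank $r$ vector bundles on $X\times\SM$ (with isomorphisms as morphisms) and the subgroupoid of the descent category where $E$ is required to be locally free of rank $r$ and $\eta$ to be an isomorphism.

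Finally I would add the determinant datum. Because $\det$ commutes with flat pullback and with composition, applying $\det$ to a descent datum $(E,\sigma)$ of rank $r$ vector bundles produces a descent datum $(\det E,\,\det\sigma)$ of line bundles, which under the equivalence above corresponds to the line bundle $\det E$ on $X\times\SM$ itself. Consequently, giving an isomorphism $\beta:\det(E)\to p_X^*\xi$ on $X\times\SM$ is equivalent, by a second application of Proposition \ref{prop:qcohx} to line bundles, to giving an isomorphism $\beta:\det(E)\to p_X^*\xi$ on $X\times U_0$ with $q_1^*\beta\circ \det\sigma=q_0^*\beta$ — which is precisely the compatibility condition ``respects $\beta$'' in the statement. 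Morphisms of such triples transport in the same way. I do not expect any real obstacle: once the simplicial diagram for the atlas $X\times U_\bullet$ is written down, the argument is entirely formal, and the nontrivial content is already contained in Proposition \ref{prop:qcohx}.
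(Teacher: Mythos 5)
Your proposal is correct and follows exactly the route the paper intends: the paper's entire proof of this corollary is the one line ``This is a straightforward consequence of Proposition \ref{prop:qcohx},'' and your write-up simply fills in the formal details of that deduction (applying the proposition to the atlas $X\times U_0\to X\times\SM$, cutting down to rank $r$ locally free sheaves by faithfully flat descent of local freeness, and descending the determinant isomorphism $\beta$). No gaps.
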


\begin{proof}
This is a straightforward consequence of Proposition \ref{prop:qcohx}.
\end{proof}

We now have the following.

\begin{lemma}\label{lem:equivautovb}
Let $X$ be a complex smooth projective curve, and
let $\SM$ be the moduli stack of vector bundles on
$X$ with fixed rank $r$ and determinant $\xi$.

There is a natural equivalence between the category of
automorphisms of $\SM$ (as fibered categories over $\Sch_{/\CC}$) and
the category of vector bundles on $X\times \SM$ of rank $r$
and fixed determinant $p^*_{X}\xi$
$$
\op{Map}_{\op{Cat}^{\op{cart}}_{/\SC}}(\SM,\, \SM)\ 
\cong\ 
\op{VBun}_{r,\xi}(X\times \SM)\ .
$$
\end{lemma}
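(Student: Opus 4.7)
The plan is to compare the two sides by expressing each one via descent data along a fixed atlas $u\,:\,U_0\,\longrightarrow\, \SM$ and then checking the two descriptions agree on the nose. Since $\SM$ has schematic diagonal (the locus of isomorphisms between two families is schematic), all the results of the section apply, and we get the simplicial scheme $U_\bullet$ as in \eqref{eq:simplicial}.

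First I would apply Proposition \ref{prop:mapxy} with $\SX\,=\,\SY\,=\,\SM$, which yields an equivalence
$$
\op{Map}_{\op{Cat}^{\op{cart}}_{/\SC}}(\SM,\,\SM)\ \cong\ \op{Desc}(U_0\to \SM,\,\SM).
$$
Unpacking the right-hand side using the definition of $\SM$: an object of $\SM(U_0)$ is exactly a pair $(E,\,\beta)$ consisting of a rank $r$ vector bundle $E$ on $X\times U_0$ together with an isomorphism $\beta\,:\,\det(E)\,\longrightarrow\, p_X^*\xi$, and an isomorphism in $\SM(U_1)$ is an isomorphism of vector bundles on $X\times U_1$ compatible with the determinant trivializations. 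Hence a descent datum for a morphism $\SM\,\longrightarrow\,\SM$ is precisely a triple $(E,\,\beta,\,\sigma)$, where $E$ is a rank $r$ vector bundle on $X\times U_0$, $\beta$ trivializes its determinant to $p_X^*\xi$, and $\sigma$ is an isomorphism on $X\times U_1$ compatible with $\beta$ and satisfying the cocycle condition on $X\times U_2$.

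Next I would compare this to the right-hand side of the lemma. Since the projection $X\times U_0\,\longrightarrow\, X\times \SM$ is an atlas (base change of an atlas is an atlas) and fiber products commute with $X\times(-)$, the simplicial scheme associated to the atlas $X\times U_0\,\longrightarrow\, X\times \SM$ is exactly $X\times U_\bullet$. Therefore Corollary \ref{cor:vbunxm} identifies
$$
\op{VBun}_{r,\xi}(X\times \SM)\ \cong\ \op{Desc}^{\rm VBun}_{r,\xi}(X\times U_0 \to X\times \SM),
$$
and the objects and morphisms of this descent category are literally the same triples $(E,\,\beta,\,\sigma)$ and compatibility conditions obtained above. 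Composing the two equivalences gives the desired equivalence, which is natural because both sides arise by the same descent construction from the chosen atlas.

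The only real point to check carefully is that the two cocycle conditions coincide, i.e.\ that the notion of ``morphism in $\SM(U_1)$ respecting $\beta$'' agrees with the notion of ``isomorphism of vector bundles on $X\times U_1$ with compatible determinant trivialization'', and similarly on $U_2$. This is essentially by the definition of the groupoid $\SM(T)$ for a scheme $T$, combined with the identification $X\times(U_0\times_\SM U_0)\,=\,(X\times U_0)\times_{X\times\SM}(X\times U_0)$. I expect this bookkeeping, rather than any conceptual issue, to be the main technical step; once it is in place the equivalence is immediate and manifestly natural.
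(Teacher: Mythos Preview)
Your proposal is correct and follows essentially the same route as the paper: apply Proposition~\ref{prop:mapxy} to identify $\op{Map}(\SM,\SM)$ with descent data for $\SM$ along $U_0\to\SM$, unpack that descent data using the definition of $\SM$ as triples $(E,\beta,\sigma)$ on $X\times U_\bullet$, and then invoke Corollary~\ref{cor:vbunxm} to identify this with $\op{VBun}_{r,\xi}(X\times\SM)$. Your extra remarks about the identification $X\times(U_0\times_\SM U_0)\cong (X\times U_0)\times_{X\times\SM}(X\times U_0)$ and the matching of cocycle conditions simply make explicit the bookkeeping that the paper leaves implicit.
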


\begin{proof}
By Proposition \ref{prop:mapxy}, we have an equivalence of categories
$$
\op{Map}_{\op{Cat}^{\op{cart}}_{/\SC}}(\SM,\,\SM) \ \cong \ 
\op{Desc}(U_0\to \SM,\,\SM).
$$
By definition of the stack of vector bundles $\SM$, a morphism $f:U_0\to \SM$
is the same thing as a rank $r$ vector bundle on $X\times \SM$ with fixed
determinant $p_X^*\xi$, and a morphism between two maps $f$ and $f'$ is
an isomorphism between the corresponding vector bundles, respecting the 
fixed determinant. It follows that we have an equivalence of categories
between the following descent data
$$
\op{Desc}(U_0\to \SM,\,\SM) \ \cong \ 
\op{Desc}^{\rm VBun}_{r,\xi}(X\times U_0\to X\times \SM).
$$
Finally, Corollary \ref{cor:vbunxm} gives an equivalence of categories
$$
\op{Desc}^{\rm VBun}_{r,\xi}(X\times U_0\to X\times \SM) \ \cong\
\op{VBun}_{r,\xi}(X\times \SM).
$$
This completes the proof.
\end{proof}

Under the equivalence in Lemma \ref{lem:equivautovb}, the identity morphism corresponds to
the Poincar\'e bundle $(\SP,\,\beta\,:\,\det(\SP)\,\cong\, p^*_X\xi)$ 
on $X\times \SM$, and a morphism
$f\,:\,\SM\,\too\, \SM$ corresponds to the pair 
$(\SP_f\,=\,(\id_X\times f)^*\SP,\ \beta_f\,=\,(\id_X\times f)^*\beta)$.

Conversely, let $f_{\SE}$ be the automorphism of $\SM$ corresponding to a vector 
bundle $\SE$ on $X\times \SM$. Then the composition law is
$$
f_{(\id_X\times f_{\SE'})^*\SE}\ =\ f_{\SE}\circ f_{\SE'}.
$$

Recall that a vector bundle $E$ is called \emph{stable} (respectively \emph{semistable}) if for any proper subbundle $F\, \not=\, 0$ of $E$ the following inequality holds:
$$ \frac{\deg(F)}{\rk(F)} \,<\, \frac{\deg(E)}{\rk(E)} \quad \quad (\text{respectively, }\,\le\, ).$$
Denote by $\SM^{\svb}(X,r,\xi)$ and $\SM^{\ssvb}(X,r,\xi)$ the substacks of $\SM(X,r,\xi)$ parametrizing the stable and semistable vector bundles respectively. As a consequence of \cite[p.~635, Theorem 2.8(B)]{Ma}, both of them are open substacks.

Semistable vector bundles of rank $r$ and determinant $\xi$ admit a moduli scheme, which will be denoted by $M^{\ssvb}(X,r,\xi)$. Analogously, by \cite[p.~635, Theorem 2.8(B)]{Ma}, the locus of stable vector bundles is parametrized by an open subscheme of $M^{\ssvb}(X,r,\xi)$ that will be denoted by $M^{\svb}(X,r,\xi)$.

\begin{proposition}\label{prop:autmap}
If the genus of $X$ is at least 3, then the automorphism group of the Poincar\'e bundle $(\SP,\,
\beta)$ with fixed determinant is the cyclic group $\ZZ/r\ZZ$. The same holds for any vector
bundle $(\SP_f,\,\beta_f)$ obtained from the Poincar\'e bundle by an automorphism $f$ of $\SM$.
\end{proposition}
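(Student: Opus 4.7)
The plan is to exploit the fact that stable vector bundles are simple in order to reduce the question to a scalar computation on the stable locus, and then to extend the conclusion to all of $X\times \SM$ by density.

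First, given an automorphism $\varphi$ of $(\SP,\beta)$, I would restrict it to each fiber $X\times \{[E]\}$ over a $\CC$-point $[E]\in \SM^{\svb}(X,r,\xi)$. Because a stable bundle is simple, $\op{Aut}(E) = \CC^*$, and hence this restriction is multiplication by a scalar $\lambda_E \in \CC^*$. Compatibility with the fixed isomorphism $\beta\,:\,\det(\SP)\,\cong\, p_X^*\xi$ forces $\det(\lambda_E\cdot \id_E) = \lambda_E^r = 1$, so in fact $\lambda_E \in \mu_r$.

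Next, I would argue that $\lambda_E$ is independent of $E$. The assignment $[E]\mapsto \lambda_E$ is a map from $\SM^{\svb}(\CC)$ to the discrete set $\mu_r$; since $\SM^{\svb}(X,r,\xi)$ is irreducible (hence connected) for $g\ge 2$, this map is constant, equal to a single $\lambda \in \mu_r$. Thus $\varphi$ agrees with $\lambda\cdot \id_{\SP}$ on the dense open substack $X\times \SM^{\svb}\subset X\times \SM$. The endomorphism $\varphi - \lambda\cdot \id_{\SP}$ is then a global section of the locally free sheaf $\op{End}(\SP)$ on the irreducible stack $X\times \SM$ which vanishes on a dense open, and must therefore vanish identically; hence $\varphi = \lambda\cdot\id_{\SP}$. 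Conversely, each $\lambda\in \mu_r$ manifestly gives an automorphism of $(\SP,\beta)$, producing the claimed isomorphism $\op{Aut}(\SP,\beta)\,\cong\, \mu_r\,\cong\, \ZZ/r\ZZ$.

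The case of $(\SP_f,\beta_f) = (\id_X\times f)^*(\SP,\beta)$ is entirely analogous: its restriction to a $\CC$-point corresponding to a stable bundle is again a stable bundle (since $f$ is an automorphism of the stack and stability will be preserved under such isomorphisms, as will be established in Section \ref{section:beyondGIT}), so the same scalar--plus--constancy--plus--density argument goes through verbatim. The main obstacle is ensuring the density/extension step is rigorous on the stack: one must know that $\SM(X,r,\xi)$ is irreducible with $\SM^{\svb}$ as a dense open substack, and that a section of a locally free sheaf on such an irreducible algebraic stack which vanishes on a dense open vanishes identically. This is obtained by pulling back along a smooth atlas, where the corresponding schematic statement is standard, provided one checks that the preimage of $\SM^{\svb}$ remains dense in the atlas.
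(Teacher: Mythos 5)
Your proof is correct in outline, but it takes a genuinely different route from the paper's. The paper computes the full space of global endomorphisms of $\SP$: it uses the codimension-two estimate on the non-stable locus (Lemma \ref{lemma:codimDiv}) to apply Hartogs' theorem, descends $ad\,\SP$ along the gerbe $\SM^{\svb}\too M^{\svb}$, and then invokes the slope stability of the resulting adjoint bundle on $X\times M^{\svb}$ from \cite{BBN} to conclude $H^0(X\times \SM,\,ad\,\SP)=0$, hence $\End(\SP)=\CC$. You instead attack the automorphism group directly: fiberwise simplicity of stable bundles gives a scalar $\lambda_E$, the determinant constraint forces $\lambda_E^r=1$, rigidity of a finite set plus connectedness gives constancy, and the vanishing of $\varphi-\lambda\cdot\id$ propagates from a dense open to all of $X\times\SM$ on a reduced atlas. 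Your route is more elementary --- it avoids the external stability input from \cite{BBN} and needs only density of the stable locus rather than a codimension-two complement --- at the price of proving a weaker intermediate statement (you do not show that every global endomorphism of $\SP$ is scalar, only that every determinant-preserving automorphism is; that weaker statement is all the Proposition asks for). Three points you should make explicit: (i) the constancy of $[E]\mapsto\lambda_E$ is cleanest via the regular function $\tr(\varphi)$ on $X\times\SM^{\svb}$, which takes values in the finite set $r\mu_r$ at closed points and is therefore locally constant on a connected reduced space; (ii) the density of the preimage of $X\times\SM^{\svb}$ in a smooth atlas, which you rightly flag, follows either from Lemma \ref{lemma:codimDiv} or simply from flatness of the atlas over the irreducible stack $\SM$; and (iii) for $(\SP_f,\beta_f)$ the forward reference to Section \ref{section:beyondGIT} is unnecessary and best avoided: since $f$ is an automorphism of the irreducible stack $\SM$, the preimage $f^{-1}(\SM^{\svb})$ is automatically a dense open substack over which $\SP_f$ restricts fiberwise to simple bundles, which is all your argument requires.
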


\begin{proof}
For simplicity, denote the substack of stable vector bundles by $\SM^{s}\subset \SM$ . It is
an open substack inside the smooth stack $\SM$, whose complement has 
codimension at least 2 (this follows from Lemma \ref{lemma:codimDiv}), 
so we can apply Hartogs' theorem
$$
H^0(X\times \SM,\, ad\,\SP)\ =\ H^0(X\times \SM^{s},\, ad\,\SP^s)
$$
(where $\SP^s$ is the restriction to the stable part $X\times \SM^{s}$).
The moduli scheme $M^{s}$ is a good moduli (in the sense of \cite{Al}), 
hence $\pi_*\SO_{\SM^{s}}\,=\,\SO_{M^{s}}$ where $\pi\,:\,\SM^{s}\,\too\, M^{s}$ 
is the morphism from the moduli stack of 
stable vector bundles to the moduli space. Furthermore, the morphism $\pi$
is a gerbe, the action of scalars on $ad\SP$ is trivial, so 
$ad\,\SP^s$ descends to $X\times M^{s}$. This means that there is a 
vector bundle $A$ on $X\times M^{s}$ and an isomorphism 
$(\id_X\times \pi)^*A\,\cong\, ad\,\SP^s$. Consequently,
$$
H^0(X\times \SM^{s},\, ad\,\SP^s)\,=\, 
H^0(X\times M^{s},\, \pi_* ad\,\SP^s)\,=\, 
H^0(X\times M^{s},\, A\otimes \pi_*\SO_{X\times \SM^{s}})\,=\, 
H^0(X\times M^{s},\, A). 
$$
Given that $g\,\geq\, 3$, we can use \cite[Theorem 3.10 (iii)]{BBN} to 
conclude that $A$ is slope stable
with respect to any polarization in $X\times M^{s}$. 
Since the 
degree of $A$ is zero, the slope stability of $A$ implies that
it has no global sections.

Summing up, we have $H^0(X\times \SM,\, ad\,\SP)\,=\,0$. Then 
$H^0(X\times \SM,\,End\,\SP)\,=\,\CC$, and the
automorphism group of $\SP$ is the multiplicative group $\CC\setminus\{0\}$.
Therefore, the automorphism group of $(\SP,\,\beta)$ is 
$\ZZ/r\ZZ$, because the determinant has to be fixed.

It is clear that the same holds for any vector bundle with fixed determinant
$(\SP_f,\,\beta_f)$ obtained using an automorphism $f$ of $\SM$.
\end{proof}

\section{Actions of basic transformations on the moduli spaces}\label{section:basicTransformations}

Recall that given a vector bundle $E$ on $X$, there are
the following three natural ways to produce another vector bundle on $X$:
\begin{itemize}
\item Tensor $E$ by some line bundle $L$ on $X$: In other words, $E\, \longmapsto\, E\otimes L$.
\item Dualize the bundle:\, $E\, \longmapsto\, E^\vee$.
\item Take the pullback of $E$ using an automorphism $\sigma\,:\,X\,\longrightarrow\, X$ of the curve:
$E\, \longmapsto\, \sigma^*E$.
\end{itemize}
More generally, following the notation in \cite{AB}, given a line bundle $L$ over $X$, an isomorphism of curves $\sigma\,:\,X'
\,\stackrel{\sim}{\longrightarrow}\, X$ and a sign $s\,\in\, \{1,\, -1\}$, we can define the \emph{basic transformation} $\ST_{\sigma,L,s}$ of a vector bundle $E$
of rank $r$ as
\begin{equation}\label{a1}
\ST_{\sigma,L,+}(E)\ :=\ \sigma^*(E\otimes L) \quad \quad \text{for }s\,=\,1,
\end{equation}
\begin{equation}\label{a2}
\ST_{\sigma,L,-}(E)\ :=\ \sigma^*(E\otimes L)^\vee \quad \quad \text{for }s\,=\,-1.
\end{equation}

Choose a line bundle $\xi$ on $X$. Let $\SM(X,r,\xi)$ be the moduli stack of vector bundles on $X$
of rank $r$ and determinant $\xi$. The above transformations \eqref{a1} and \eqref{a2} are well defined over families of vector bundles with fixed determinant $\xi$ and, for each choice of the triple $(\sigma, \, L, \,s)$, the transformation $\ST_{\sigma,L,s}$ induces the isomorphism between moduli stacks
$$\ST_{\sigma,L,s}\ :\ \SM(X,r,\xi)\ \stackrel{\sim}{\longrightarrow}\ \SM(X',r,\sigma^*(\xi\otimes L^r)^s)$$
described in Definition \ref{def:basicTransformation} at the introduction.

The operations of pullback, tensor product and taking dual preserve both the stability and semistability of a vector bundle.
Consequently, each of the two maps in \eqref{a1} and \eqref{a2} preserves
both the substacks of semistable and stable vector bundles and they induce isomorphisms
$$\ST_{\sigma,L,s}^{\op{sch}}\ :\ M^{\ssvb}(X,r,\xi) \ \stackrel{\sim}{\longrightarrow}\ M^{\ssvb}(X',r,\sigma^*(\xi\otimes L^r)^s)$$
between the corresponding moduli schemes of semistable vector bundles, which, moreover, preserve the locus of stable vector bundles.

By \cite{KP95} or \cite{BGM13}, the automorphism group of the moduli scheme $M^{\ssvb}(X,r,\xi)$ is given by 
the group of basic transformations $\ST_{\sigma,L,s}^{\op{sch}}$ such that $\sigma\,\in\, \op{Aut}(X)$ and 
$\sigma^*(\xi\otimes L^r)^s \,\cong\, \xi$. Moreover, by \cite[Theorem 2.11]{AB}, each isomorphism of schemes 
$\Psi^{\op{sch}}\,:\,M^{\ssvb}(X,r,\xi) \,\longrightarrow\, M^{\ssvb}(X',r',\xi')$ between two different 
moduli schemes of semistable vector bundles must necessarily be a basic transformation $\ST_{\sigma,L,s}$ for 
some isomorphism $\sigma\,:\,X'\,\longrightarrow \,X$ between the corresponding curves. More precisely, the 
following holds by \cite{KP95,BGM13,AB}.

\begin{lemma}[{\cite[Theorem B]{KP95}, \cite[Theorem 1.1]{BGM13}, \cite[Theorem 2.11]{AB} and \cite[Lemma 2.1]{AB}}]
\label{lemma:autoScheme}
Let $X$ and $X'$ be smooth projective curves of genus $g$ and $g'$ respectively such that $g,\,g'\,\ge\, 4$. Let $\xi$ and $\xi'$ be line bundles over $X$ and $X'$
respectively. If $\Psi^{\op{sch}}\,:\,M^{\ssvb}(X,r,\xi) \,\longrightarrow\, M^{\ssvb}(X',r',\xi')$ is an
isomorphism of schemes, then $r\,=\,r'$, and there exist
\begin{itemize}
\item an isomorphism $\sigma\,:\,X'\,\stackrel{\sim}{\longrightarrow}\, X$,
\item a sign $s\,\in\, \{1,\,-1\}$, and
\item a line bundle $L$ on $X$ such that $\xi' \,\cong\, \sigma^*(\xi\otimes L^r)^s$,
\end{itemize}
satisfying the condition that for each S-equivalence class $[E]$ of a semistable vector bundle in the moduli scheme $M^{\ssvb}(X,r,\xi)$,
$$\Psi^{\op{sch}}([E])\ \cong\ [\ST_{\sigma,L,s}(E)].$$
Moreover, if $r\,=\,2$, then there is such a transformation with $s\,=\,1$.
\end{lemma}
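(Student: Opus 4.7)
The plan is to recognize that Lemma \ref{lemma:autoScheme} is an assembly of results already established in the literature; the proof is therefore primarily bookkeeping together with one short reduction in the rank-$2$ case. The main input is \cite[Theorem 2.11]{AB}, which subsumes the automorphism theorems of \cite[Theorem B]{KP95} and \cite[Theorem 1.1]{BGM13}.

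First, I would apply \cite[Theorem 2.11]{AB} directly to $\Psi^{\op{sch}}$. Under the hypothesis $g,g'\geq 4$, that theorem immediately yields the rank equality $r\,=\,r'$, produces an isomorphism $\sigma\,:\,X'\,\longrightarrow\, X$ between the underlying curves (reconstructed from intrinsic moduli-theoretic invariants in the spirit of \cite{KP95,BGM13}), and supplies line bundle data $L$ together with a sign $s\,\in\,\{+1,-1\}$ such that $\Psi^{\op{sch}}$ agrees with the basic transformation $\ST_{\sigma,L,s}$ at the level of S-equivalence classes. The relation $\xi'\,\cong\, \sigma^*(\xi\otimes L^r)^s$ is then forced by comparing determinants: any bundle in the image of $\ST_{\sigma,L,s}$ has determinant $\det\bigl(\sigma^*(E\otimes L)^{\pm}\bigr)\,=\,\sigma^*(\xi\otimes L^r)^{\pm}$, and this must coincide with $\xi'$ by the very definition of the target stack.

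For the last assertion, when $r\,=\,2$, I would invoke \cite[Lemma 2.1]{AB}: the canonical isomorphism $E^\vee\,\cong\, E\otimes(\det E)^{-1}$ for rank-$2$ vector bundles allows one to rewrite any dualization case as a pure tensoring. Explicitly, if $s\,=\,-1$, then setting $L'\,:=\,L^{-1}\otimes\xi^{-1}$ one computes
\[
\ST_{\sigma,L,-}(E)\,=\,\sigma^*(E\otimes L)^\vee\,\cong\,\sigma^*\bigl(E\otimes L\otimes(\xi\otimes L^2)^{-1}\bigr)\,=\,\sigma^*(E\otimes L'),
\]
so $\ST_{\sigma,L,-}\,\cong\,\ST_{\sigma,L',+}$ on rank-$2$ bundles with determinant $\xi$. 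A short check shows $\sigma^*(\xi\otimes (L')^2)\,\cong\,\sigma^*(\xi\otimes L^2)^{-1}$, consistent with the determinant identification on the target. The only step that would pose a genuine obstacle, were one to re-prove everything from scratch, is the reconstruction theorem \cite[Theorem 2.11]{AB} itself (which builds on the Torelli-type and stratification arguments of \cite{KP95,BGM13}); once that is invoked, the remainder of the lemma is the routine determinant computation and the rank-$2$ reduction displayed above.
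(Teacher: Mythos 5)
Your proposal is correct and takes essentially the same route as the paper, which states this lemma as a direct citation of \cite[Theorem 2.11]{AB} (together with \cite{KP95,BGM13}) and \cite[Lemma 2.1]{AB} without further proof. Your explicit determinant comparison and the rank-$2$ reduction via $E^\vee\cong E\otimes(\det E)^{-1}$ (with $L'=L^{-1}\otimes\xi^{-1}$) are accurate fleshings-out of the cited inputs.
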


\section{Restriction to the moduli stack of semistable vector bundles}
\label{section:beyondGIT}

We will use similar ideas as in the proof of the Torelli Theorem for the moduli stack in \cite[\S~2]{ABGM}, 
namely, use \textit{the ``beyond GIT'' characterization of semistability} to prove that any isomorphism
between moduli stacks of vector bundles with fixed determinant preserves the locus of semistable vector bundles.

\begin{proposition}\label{prop:picm}
The Picard group of the moduli stack with fixed determinant is
generated by the determinantal line bundle
$$
\Pic(\SM(X,r,\xi))\ \cong\ \ZZ .
$$
If $g\,\geq\, 2$, the same holds for the semistable open substack 
$\SM^{\ssvb}(X,r,\xi)$, and if $g\,\geq \,3$, the same holds 
for the stable open substack $\SM^{\svb}(X,r,\xi)$.
\end{proposition}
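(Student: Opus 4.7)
The plan is to prove the three statements together by combining the known computation of $\Pic(\SM(X,r,\xi))$ with a codimension argument that extends it to the semistable and stable substacks. The stack $\SM(X,r,\xi)$ is smooth and irreducible, so its Picard group is identified with the Weil divisor class group, and removing a closed substack of codimension at least two does not alter that class group. Consequently the restriction morphisms
$$\Pic(\SM(X,r,\xi))\ \too\ \Pic(\SM^{\ssvb}(X,r,\xi))\ \too\ \Pic(\SM^{\svb}(X,r,\xi))$$
are isomorphisms as soon as the complements $\SM \setminus \SM^{\ssvb}$ and $\SM \setminus \SM^{\svb}$ have codimension at least two in $\SM(X,r,\xi)$.

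First I would verify the two codimension bounds. The Shatz stratification of $\SM(X,r,\xi)$ by Harder--Narasimhan type yields strata whose codimensions grow linearly in $g-1$, and a direct check shows every non-trivial stratum has codimension at least two once $g\ge 2$; this handles the semistable complement. For the strictly semistable complement one stratifies instead by Jordan--H\"older type of the polystable representative and makes an analogous estimate, obtaining codimension at least two whenever $g\ge 3$. Both estimates are precisely the content of Lemma \ref{lemma:codimDiv} in the paper, so the actual step here is just to invoke that lemma.

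With the codimension estimates in place, it only remains to identify $\Pic(\SM(X,r,\xi))$ itself. For this I would cite the theorem of Laszlo--Sorger (building on Drezet--Narasimhan and Kumar--Narasimhan--Ramanathan), which shows that the Picard group of the moduli stack of vector bundles with fixed determinant $\xi$ is infinite cyclic, generated by an appropriately normalized determinantal line bundle built from the Poincar\'e family $\SP$. The main obstacle is essentially one of compatibility of conventions: one must check that the generator produced by Laszlo--Sorger on the full stack restricts to a generator of the Picard groups of $\SM^{\ssvb}$ and $\SM^{\svb}$ and, for consistency, matches the determinantal line bundle on $M^{\ssvb}(X,r,\xi)$ of Drezet--Narasimhan after pullback. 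Once these matchings are pinned down, the codimension argument forces all three groups to be the same copy of $\ZZ$, and the proposition follows.
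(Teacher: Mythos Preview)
Your approach is essentially identical to the paper's: cite the known computation of $\Pic(\SM(X,r,\xi))$ (the paper uses \cite{BH10} rather than Laszlo--Sorger, but either works) and then invoke smoothness plus a codimension-$2$ restriction argument for the open substacks. One small correction: you attribute both codimension estimates to Lemma~\ref{lemma:codimDiv}, but that lemma only handles the non-stable locus under the hypothesis $g\ge 3$; the $g\ge 2$ bound for the non-semistable locus is a separate statement (Lemma~\ref{lemma:codimNotSemiStable}), so you should cite both.
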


\begin{proof}
In the case of moduli functor and moduli space, this was proved in \cite{DN}.
In the case of the moduli stack, a detailed proof can be found in
\cite[Proposition 4.2.3 and Theorem 4.2.1]{BH10}.

The statements for the semistable and stable substacks follow because
the moduli stack is smooth, and the restriction morphism of line bundles
to an open substack whose complement has codimension at least $2$ is 
an isomorphism. The codimension estimates
follow from Lemma \ref{lemma:codimNotSemiStable} and
Lemma \ref{lemma:codimDiv}.
\end{proof}

One of the differences between the moduli space and the moduli stack of
stable vector bundles is that the moduli stack always admits a
universal vector bundle but, if the degree and rank are not coprime, the
moduli space of stable vector bundles does not. Nevertheless, there is always
a covering by \'etale open subsets of the moduli space of stable bundles admitting a
Poincar\'e family.

\begin{lemma}\label{lem:covering}
There exists an \'etale covering $c\,:\,U\,\too\, M^{\svb}(X,r,\xi)$ such that
there is a tautological vector bundle $\SE_U$ on $X\times U$, i.e.,
the restriction $\SE\big\vert_{X\times \{u\}}$ is in the isomorphism class
corresponding to the point
$c(u)$ for all $u\, \in\, U$. The same holds for the moduli space of fixed degree 
$M^{\svb}(X,r,d)$. 
\end{lemma}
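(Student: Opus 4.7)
The plan is to exploit the fact that the forgetful morphism from the moduli stack of stable vector bundles to its coarse moduli space is smooth and surjective (as already noted in Proposition \ref{prop:autmap}, it is a gerbe banded by $\mu_r$ in the fixed-determinant case, and by $\mathbb{G}_m$ in the fixed-degree case), and then to apply the standard descent result that smooth surjective morphisms of schemes admit sections étale-locally on the target.

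First, I would fix an atlas $a\,:\,P\,\longrightarrow\, \SM^{\svb}(X,r,\xi)$ by a scheme $P$; such an atlas exists because the moduli stack is algebraic. By the very definition of the moduli stack (see Section \ref{section:groupoids}), $a$ corresponds to a family $\SE_P$ of stable vector bundles on $X\times P$ with determinant $p_X^*\xi$, and the restriction of $\SE_P$ to $X\times \{p\}$ is in the isomorphism class encoded by $a(p)$ for every $p\in P$.

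Next, I would compose $a$ with the structural morphism $\pi\,:\,\SM^{\svb}(X,r,\xi)\,\longrightarrow\, M^{\svb}(X,r,\xi)$ to the moduli scheme. Since $\pi$ is a $\mu_r$-gerbe (the stabilizer of a stable bundle with fixed determinant being the group of scalar automorphisms whose $r$-th power is $1$), $\pi$ is smooth and surjective; and $a$ is smooth and surjective by construction, so the composition
$$c_P\ :=\ \pi\circ a\ :\ P\ \longrightarrow\ M^{\svb}(X,r,\xi)$$
is smooth and surjective as a morphism of schemes.

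I would then invoke the standard fact that a smooth surjective morphism of (locally Noetherian) schemes admits sections étale-locally on the target (see e.g.\ \cite[EGA IV, 17.16.3]{}): there exists an étale cover $c\,:\,U\,\longrightarrow\, M^{\svb}(X,r,\xi)$ together with a lift $s\,:\,U\,\longrightarrow\, P$ satisfying $c_P\circ s=c$. Setting $\SE_U\,:=\,(\id_X\times s)^*\SE_P$ produces a vector bundle on $X\times U$ whose restriction to each fibre $X\times \{u\}$ represents the isomorphism class $c(u)$, as required. The fixed-degree case $M^{\svb}(X,r,d)$ proceeds identically, replacing $\mu_r$ by $\mathbb{G}_m$ in the gerbe description; no step of the argument uses the determinant being fixed beyond the existence of the atlas and of the coarse moduli morphism. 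The only step requiring a black-box is the étale-local splitting of smooth surjections of schemes, but this is entirely standard, so no real obstacle is expected.
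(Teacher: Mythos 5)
Your proof is correct, and it reaches the conclusion by a somewhat different route than the paper. The paper works with the explicit GIT presentation $M^{\svb}(X,r,d)\,\cong\,[Q/\PGL(N)]$ and $\SM^{\svb}(X,r,d)\,\cong\,[Q/\GL(N)]$: the first isomorphism is interpreted as a principal $\PGL(N)$-bundle $P\to M^{\svb}$ together with an equivariant map to $Q$, the torsor is trivialized on an \'etale cover $U$, and the trivialization is lifted along $\GL(N)\to\PGL(N)$ to produce a $\GL(N)$-equivariant map $U\times\GL(N)\to Q$, hence a map $U\to[Q/\GL(N)]\cong\SM^{\svb}$ carrying a tautological family. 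You instead take an arbitrary schematic atlas $P\to\SM^{\svb}$, observe that the composite $P\to M^{\svb}$ is a smooth surjection of schemes (using that the coarse moduli map is a gerbe banded by $\mu_r$, resp.\ $\mathbb{G}_m$, both smooth over $\CC$ --- a fact the paper itself asserts in Proposition \ref{prop:autmap}), and invoke the existence of \'etale-local sections of smooth surjections. The underlying mechanism is the same in both arguments --- an \'etale-local section of a smooth surjection onto the coarse space, followed by pullback of a family living on a scheme --- but your packaging avoids the GIT presentation and the $\GL(N)/\PGL(N)$ bookkeeping entirely, at the cost of making explicit the smoothness of $\pi\colon\SM^{\svb}\to M^{\svb}$. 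Both arguments are complete; yours is marginally more general, since it uses nothing about the moduli problem beyond algebraicity of the stack, the existence of the coarse space, and the gerbe structure on the stable locus.
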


\begin{proof}
It is enough to give the proof in the fixed degree
case, because the fixed line bundle case follows by restriction.
The GIT construction of the moduli space shows that both the moduli
space and the moduli stack of stable bundles are given
as stack quotients (see for instance \cite{Si} and \cite[Proposition 3.3]{Gom}). There is a scheme $Q$ with an action of $\PGL(N)$
for $N$ sufficiently large such that
$$
M^{\svb}(X,r,d)\ \cong\ [Q/\PGL(N)], \quad \SM^{\svb}(X,r,d)\ \cong\ [Q/\GL(N)].
$$
The first isomorphism produces the following diagram:
$$
\xymatrix{
{U\times \GL(N)} \ar[r]^{h} \ar[rd]_{q} &{U\times \PGL(N)} \ar[r] \ar[d] &
{P} \ar[r]^{f} \ar[d]_{p} & Q\\
{} & {U} \ar[r]^-{c} & {M^{\svb}(X,r,d).}
}
$$
Indeed, giving a morphism from the moduli space to the stack quotient
is equivalent to giving a principal $\PGL(N)$-bundle $p$ on the moduli
space and a $\PGL(N)$-equivariant morphism $f$. The principal bundle
$p$ is locally trivial in the \'etale topology, so there is an \'etale
covering $c$ and the square in the diagram is Cartesian. The morphism
$h$ in the diagram is induced by the quotient map from $\GL(N)$ to $\PGL(N)$.

Note that the morphism $q$ in the diagram is a principal $\GL(N)$-bundle and the composition
of the morphisms in the first row is $\GL(N)$-equivariant. So we
obtain a morphism from $U$ to $[Q/\GL(N)]$ and $[Q/\GL(N)]$ is isomorphic to
the moduli stack, so it it induces a tautological vector bundle
$\SE_U$ on $X\times U$. 
\end{proof}

In the ``beyond GIT'' theory (see \cite{Al,He,HL,AlHLHe}), given a line bundle $\SL$ on a stack $\SM$, we say that a point
$x\,\in\, \SM$ is $\SL$-semistable if for all possible maps $f\,:\, [\Spec(\CC[t])/\CC^*] \,\longrightarrow\,
\SM$ such that $f(1) \,=\,x$, we have
$$\op{wt}(f^*\SL\big\vert_0)\ \le\ 0.$$

\begin{lemma}\label{lemma:recoverScheme}
Let $X$ and $X'$ be smooth complex projective curves of genus at least $2$. Let $\Psi\,:\,\SM(X,r,\xi)
\,\longrightarrow\, \SM(X',r',\xi')$ be an isomorphism of moduli stacks. Then $r\,=\,r'$, and $\Psi$ restricts to an isomorphism
$$\Psi^{\ssvb}\,:\,\SM^{\ssvb}(X,r,\xi)\,\longrightarrow\, \SM^{\ssvb}(X',r',\xi').$$
Furthermore, there exists an isomorphism of moduli schemes $\psi\,:\,M^{\ssvb}(X,r,\xi) \,\longrightarrow
\,M^{\ssvb}(X',r',\xi')$ such that the following diagram commutes:
\[
\xymatrix{
\SM^{\ssvb}(X,r,\xi) \ar[rr]^{\Psi^{\ssvb}} \ar[d]_{\pi} && \SM^{\ssvb}(X',r,\xi') \ar[d]^{\pi'}\\
M^{\ssvb}(X,r,\xi) \ar[rr]^{\psi} && M^{\ssvb}(X',r',\xi').
}
\]
\end{lemma}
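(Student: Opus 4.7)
The idea is to recognize the semistable substack intrinsically from the stacky structure of $\SM(X,r,\xi)$, using only the Picard group together with the beyond-GIT characterization of semistability, and then to descend the isomorphism to the good moduli spaces.

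First, I would exploit the rigidity of the Picard groups. By Proposition \ref{prop:picm}, we have $\Pic(\SM(X,r,\xi))\cong\ZZ$ generated by the determinantal line bundle $\SL$, and likewise $\Pic(\SM(X',r',\xi'))\cong\ZZ$ with generator $\SL'$. Hence $\Psi^*\SL'\cong\SL^{\varepsilon}$ for some $\varepsilon\in\{+1,-1\}$. To rule out $\varepsilon=-1$, I would note that positive powers of $\SL$ have many global sections (they descend to ample line bundles on the projective moduli scheme $M^{\ssvb}(X,r,\xi)$), while the pullback of $\SL'$ by $\Psi$ must still have this abundance of sections; since $\SL^{-n}$ has no nonzero global sections for $n>0$, we are forced to have $\varepsilon=+1$.

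Next, I would invoke the beyond-GIT criterion described just before the lemma: a point $E\in\SM(X,r,\xi)$ is $\SL$-semistable (in the sense of \cite{Al,He,HL,AlHLHe}) if and only if it is semistable in the classical sense; equivalently, $\SM^{\ssvb}(X,r,\xi)$ is exactly the $\SL$-semistable open substack, and the analogous statement holds for $\SL'$ on $\SM(X',r',\xi')$. Because $\Psi^*\SL'\cong\SL$, pulling back a test map $f:[\Spec\CC[t]/\CC^*]\to\SM(X',r',\xi')$ through $\Psi$ preserves the relevant weights, so $\Psi$ identifies the $\SL$-semistable locus with the $\SL'$-semistable locus. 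Applying the same reasoning to $\Psi^{-1}$ yields the desired open-substack isomorphism $\Psi^{\ssvb}\colon\SM^{\ssvb}(X,r,\xi)\stackrel{\sim}{\longrightarrow}\SM^{\ssvb}(X',r',\xi')$.

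Finally, $\pi\colon\SM^{\ssvb}(X,r,\xi)\to M^{\ssvb}(X,r,\xi)$ is a good moduli space in the sense of \cite{Al}, and similarly for $\pi'$. By the universal property of good moduli spaces, the compositions $\pi'\circ\Psi^{\ssvb}$ and $\pi\circ(\Psi^{\ssvb})^{-1}$ factor uniquely through $\pi$ and $\pi'$ respectively, producing morphisms of schemes $\psi$ and $\psi^{-1}$ that are mutually inverse, hence an isomorphism $\psi\colon M^{\ssvb}(X,r,\xi)\to M^{\ssvb}(X',r',\xi')$ fitting in the claimed commutative square. The equality $r=r'$ is then a consequence of Lemma \ref{lemma:autoScheme} applied to $\psi$.

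I expect the main obstacle to be the clean identification of the semistable substack; specifically, pinning down that the generator of $\Pic(\SM)$ must pull back to the \emph{positive} generator (rather than its inverse), and that the beyond-GIT semistability with respect to the determinantal line bundle recovers precisely the classical semistable locus. Once this intrinsic characterization is secured, the descent to moduli schemes is a direct application of the universal property of good moduli spaces.
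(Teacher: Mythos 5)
Your overall strategy is the same as the paper's: identify $\SM^{\ssvb}$ intrinsically via the beyond-GIT semistability with respect to the determinantal generator of $\Pic\cong\ZZ$ (this is \cite[Proposition 3.3]{ABGM}, which the paper cites at exactly this point), then descend to the moduli schemes using the good-moduli-space property from \cite{Al}. One small divergence: you handle the sign ambiguity $\Psi^*\SL'\cong\SL^{\pm1}$ by arguing that positive powers of the determinantal bundle have sections while negative powers do not; the paper sidesteps determining the sign by observing that for a nontrivial $\SL$ exactly one of the $\SL$-semistable and $\SL^{-1}$-semistable loci is nonempty, and that nonempty one is $\SM^{\ssvb}$. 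Both work; the paper's trick avoids having to justify the vanishing of sections of negative powers on the stack (which, while true, needs the Hartogs/codimension-2 argument and descent to the projective moduli scheme).

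There is, however, one genuine gap: your derivation of $r=r'$. You obtain it by applying Lemma \ref{lemma:autoScheme} to the induced isomorphism $\psi$ of moduli schemes, but that lemma carries the hypothesis $g,g'\ge 4$, whereas the statement you are proving only assumes genus at least $2$. For $g$ or $g'$ equal to $2$ or $3$ your argument gives no conclusion about the ranks. The paper instead extracts $r=r'$ (together with $X\cong X'$) at the very start from the Torelli theorem for moduli \emph{stacks} \cite{ABGM}, which applies in the stated genus range and is independent of the classification of scheme automorphisms. You should either invoke that result or supply a separate argument (e.g., comparing dimensions or other invariants of the stacks) valid for genus $2$ and $3$; as written, the equality $r=r'$ is unproved in part of the stated range.
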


\begin{proof}
{}From the Torelli Theorem for moduli stacks \cite{ABGM} it follows that $X\,\cong\, X'$ and $r\,=\,r'$. 
The Picard group of $\SM(X,r,\xi)$ is $\ZZ$ (Proposition \ref{prop:picm}) and it is generated by the determinantal line bundle $\SL_{\det}$. Then \cite[Proposition 3.3]{ABGM}
provides the following description of $\SL$-stability for any line bundle $\SL\,=\,\SL_{\det}^a\in \Pic(\SM(X,r,\xi))$:
\begin{itemize}
\item If $a\,<\,0$, then all points in $\SM(X,r,\xi)$ are $\SL_{\det}^a$-unstable.
\item If $a\,=\,0$, then all points in $\SM(X,r,\xi)$ are $\SL_{\det}^a$-semistable.
\item If $a\,>\,0$, then a point $x\,\in\, \SM(X,r,\xi)$ corresponding to a vector bundle $E$ is $\SL_{\det}^a$-semistable if and only if $E$ is semistable.
\end{itemize}
As a consequence, for any nontrivial line bundle $\SL$ on $\SM(X,r,\xi)$, the substack $\SM^{\ssvb}(X,r,\xi)$ can be canonically recovered from the moduli stack
$\SM(X,r,\xi)$ as follows. Choose any nontrivial line bundle $\SL$ on $\SM(X,r,\xi)$ and consider the substacks of $\SL$-semistable and $\SL^{-1}$-semistable
points. Exactly one of them is nonempty; this nonempty substack coincides with the substack of semistable vector bundles. Now, take any nontrivial
line bundle $\SL'$ on $\SM(X',r,\xi')$. So $\Psi^*\SL'$ is also nontrivial. The map $\Psi$ must send the locus of $\Psi^*\SL'$-semistable vector
bundles to the locus of $\SL'$-semistable vector bundles. In particular, one of them is nonempty if and only if the other is nonempty. Switching $\SL$ for $\SL^{-1}$ if
necessary, we can assume that both subschemes, namely the locus of $\SL'$-semistable vector
bundles and the locus of $\Psi^*\SL'$-semistable vector bundles,
are nonempty and, then, they must respectively be the corresponding substacks of semistable vector bundles $\SM^{\ssvb}(X,r,\xi)$ and $\SM^{\ssvb}(X',r',\xi')$. Therefore, the
map $\Psi$ must restrict to an isomorphism $\Psi^{\ssvb}\,:\,\SM^{\ssvb}(X,r,\xi)\,\longrightarrow\, \SM^{\ssvb}(X',r,\xi')$.

Finally, by \cite[Theorem 6.6]{Al}, the moduli scheme $M^{\ssvb}(X,r,\xi)$ is a good quotient of 
$\SM^{\ssvb}(X,r,\xi)$ which corepresents the moduli stack. Thus, there must exist a unique isomorphism 
$$\psi\ :\ M^{\ssvb}(X,r,\xi)\ \longrightarrow\ M^{\ssvb}(X',r,\xi')$$ such that the following diagram 
commutes
\[
\xymatrix{
\SM^{\ssvb}(X,r,\xi) \ar[rr]^{\Psi^{\ssvb}} \ar[d]_{\pi} && \SM^{\ssvb}(X',r,\xi') \ar[d]^{\pi'}\\
M^{\ssvb}(X,r,\xi) \ar[rr]^{\psi} && M^{\ssvb}(X',r',\xi') \, .
}\
\]
This completes the proof.
\end{proof}

Lemma \ref{lemma:autoScheme} (which corresponds to \cite[Theorem B]{KP95}, \cite[Theorem 1.1]{BGM13} and \cite[Theorem 2.11]{AB}) gives a full classification
of isomorphisms between moduli schemes of vector bundles with fixed determinant. Thus, we know all possible choices for the isomorphism $\psi$ obtained in
Lemma \ref{lemma:recoverScheme}. Given an isomorphism $\Psi$ as in Lemma \ref{lemma:recoverScheme}, if $X$ and $X'$ are curves of genus at least $4$ then there must exist
\begin{itemize}
\item an isomorphism $\sigma\,:\,X'\,\stackrel{\sim}{\longrightarrow} \,X$,
\item a sign $s\,\in\, \{1,\,-1\}$, and
\item a line bundle $L$ on $X$ such that $\xi'\,\cong\, \sigma^*(\xi\otimes L^r)^s$,
\end{itemize}
such that $\psi\,=\,\ST_{\sigma,L,s}$. We know that the basic transformation $\ST_{\sigma,L,s}$ gives a well defined isomorphism of stacks $\ST_{\sigma,L,s}
\,:\,\SM(X,r,\xi) \,\longrightarrow\, \SM(X',r',\xi')$. Composing with its inverse gives an automorphism
$$\Phi\ :=\ \ST_{\sigma,L,s}^{-1} \circ \Psi\ :\ \SM(X,r,\xi)
\ \longrightarrow\ \SM(X,r,\xi)$$ of the moduli stack of vector bundles of fixed determinant $\xi$ which induces the identity map on the corresponding
moduli scheme. Applying again Lemma \ref{lemma:recoverScheme} to this automorphism $\Phi$, we know that it must preserve the semistable locus and --- as it induces
the identity map on the moduli scheme --- it must also preserve the stable locus. The following lemma shows that the restriction of $\Phi$ to the moduli substack of stable bundles
is actually the identity map.

\begin{lemma}
\label{lemma:tensorLineBundle}
Let $X$ be a smooth complex projective curve of genus $g\ge 3$. Let 
$$
\Phi\,:\,\SM^{\svb}(X,r,\xi) \,\longrightarrow \,\SM^{\svb}(X,r,\xi)
$$
be an automorphism such that the induced map $\varphi\,:\,M^{\svb}(X,r,\xi) \,\longrightarrow\, M^{\svb}(X,r,\xi)$ coincides with the
identity map, i.e., $\Phi(E)\,\cong \,E$ for each stable vector bundle
$E$. Then $\Phi$ is isomorphic to the identity map.
\end{lemma}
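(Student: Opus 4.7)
The plan is to translate the statement into an isomorphism of vector bundles on $X\times\SM^{\svb}(X,r,\xi)$. Applying the stable-locus analog of Lemma \ref{lem:equivautovb}, the identity automorphism corresponds to the restricted Poincar\'e bundle $\SP^s$, while $\Phi$ corresponds to $\SE_\Phi:=(\id_X\times\Phi)^*\SP^s$; both come equipped with a fixed trivialisation of the determinant against $p_X^*\xi$. Thus showing that $\Phi$ is isomorphic to the identity amounts to exhibiting an isomorphism $\SP^s\cong\SE_\Phi$ compatible with these trivialisations. The hypothesis that the induced map on $M^{\svb}$ is the identity reads precisely as: $\SP^s$ and $\SE_\Phi$ have isomorphic restrictions to every fibre of $p_{\SM}\colon X\times\SM^{\svb}\to\SM^{\svb}$.

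The next step is a Schur and base-change argument along $p_{\SM}$. Since each fibre is stable and the two fibre restrictions are isomorphic, $\Hom$ on each fibre is one-dimensional, so cohomology and base change produces a line bundle
$$\SL\ :=\ (p_{\SM})_*\bigl((\SP^s)^\vee\otimes\SE_\Phi\bigr)$$
on $\SM^{\svb}$, and the adjoint evaluation $p_{\SM}^*\SL\otimes\SP^s\to\SE_\Phi$ is on each fibre a non-zero morphism between stable bundles of the same rank, hence an isomorphism. Taking determinants on both sides forces $p_{\SM}^*\SL^r\cong\SO_{X\times\SM^{\svb}}$, whence $\SL^r\cong\SO_{\SM^{\svb}}$ (restrict along a section $\{x_0\}\times\SM^{\svb}\hookrightarrow X\times\SM^{\svb}$). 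Because $g\ge 3$, Proposition \ref{prop:picm} tells us that $\Pic(\SM^{\svb}(X,r,\xi))\cong\ZZ$ is torsion-free, so $\SL$ is trivial and $\SP^s\cong\SE_\Phi$ as plain vector bundles. Upgrading to a determinant-compatible isomorphism is routine: any such $\gamma$ induces, via the two fixed trivialisations of the determinant, a global scalar $\lambda\in\CC^*$, and $\lambda^{-1/r}\gamma$ is determinant-compatible for any chosen $r$-th root.

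The main technical obstacle is carrying out the cohomology-and-base-change step over the stack $\SM^{\svb}$ rather than over a scheme. The fall-back plan is to run the Schur argument on the \'etale cover $c\colon U\to M^{\svb}$ with tautological family $\SE_U$ provided by Lemma \ref{lem:covering}, where everything is schematic: one obtains a line bundle $L$ on $U$ together with an isomorphism $\SE_U\otimes p_U^*L\cong(\id_X\times\Phi)^*\SE_U$. The resulting line bundle descends along the gerbe $\SM^{\svb}\to M^{\svb}$ to a line bundle on $\SM^{\svb}$, and the triviality of that line bundle is then decided by the Picard computation on $\SM^{\svb}$ itself.
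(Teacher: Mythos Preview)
Your proposal is correct and takes essentially the same approach as the paper: form the line bundle $\SL=(p_{\SM})_*\bigl((\SP^s)^\vee\otimes\SE_\Phi\bigr)$, show the evaluation map $p_{\SM}^*\SL\otimes\SP^s\to\SE_\Phi$ is an isomorphism, and conclude $\SL$ is trivial via the torsion-free Picard group of $\SM^{\svb}$. The paper handles the cohomology-and-base-change step by pulling back along the flat atlas $s\colon Q\to\SM^{\svb}$ from the GIT construction and applying Grauert's theorem on the integral scheme $Q$, which is cleaner than your fallback through the \'etale cover $U\to M^{\svb}$ (where the phrase ``descends along the gerbe $\SM^{\svb}\to M^{\svb}$ to a line bundle on $\SM^{\svb}$'' is garbled---what you actually want is that the line bundle on $U$ is the pullback under $U\to\SM^{\svb}$ of the $\SL$ already living on the stack).
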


\begin{proof}
In this proof we denote $\SM^{\svb}\,=\,\SM^{\svb}(X,r,\xi)$ and $M^{\svb}\,=\,M^{\svb}(X,r,\xi)$. Let $\SE\,=\,\SE_{\op{univ}}^{\svb}$ be the universal vector bundle on
$\SM^{\svb}$, and let $\SE'\,=\,(\id_X\times \Phi)^*\SE$. 

We claim that
$$
\SL\ =\ R^0(\pi_{\SM^{\svb}})_*\left(\SE^*\otimes \SE'\right)
$$
is a line bundle on $\SM^{\svb}$.

To prove the claim, let $s\,:\,Q\too \SM^{\svb}\,=\,[Q/\GL(N)]$ be an atlas (cf. proof of Lemma
\ref{lem:covering}). It suffices to prove that $s^*\SL$ is a line bundle. The morphism $s$
is flat, so by flat base change (cf. \cite[Lemma 005Y]{HL25}) we have
$$
s^* \SL\ =\ \pi_Q{}_* (\id_X\times s)^* \left(\SE^*\otimes \SE'\right).
$$
The scheme $Q$ is integral, so we can apply Grauert's theorem
(cf. \cite[Corollary III 12.9]{Ha}) to the morphism $\pi_Q$.
To do so, we calculate
$$
H^0((\id_X\times s)^* \left(\SE^*\otimes \SE'\right))\,=\,
\Hom(\SE\big\vert_{s(q)\times X}^\vee,\SE\big\vert_{\Phi(s(q))\times X})\,\cong\, \CC ,
$$
where the last isomorphism is due to the fact that $\SE\big\vert_{s(q)\times X}$ and $\SE\big\vert_{\Phi(s(q))\times X}$
are isomorphic stable bundles, as $\Phi$ induces the identity map at the level
of moduli schemes. Then Grauert's theorem says that $s^*\SL$ 
is a line bundle. This proves the claim.

Consider the natural evaluation morphism
\begin{equation}
\label{eq:evaluniv}
(\pi_{\SM^{\svb}})^*(\pi_{\SM^{\svb}})_*\left(\SE^*\otimes \SE'\right) \otimes
\SE\ \too\ \SE' .
\end{equation}
To check that it is an isomorphism, we pullback to $Q$ and use
base change. Then, for each slice $q\times X$ the evaluation morphism
becomes
$$
\Hom(\SE\big\vert_{s(q)\times X},\, \SE\big\vert_{\Phi(s(q))\times X})\otimes \SE\big\vert_{s(q)\times X}
\ \too\ \SE\big\vert_{\Phi(s(q))\times X}
$$
which is clearly an isomorphism because the Hom group is one-dimensional.
Therefore, the evaluation morphism in \eqref{eq:evaluniv} gives an isomorphism
\begin{equation}\label{eq:eprime}
(\pi_{\SM^{\svb}})^*\SL \otimes \SE \ \stackrel{\cong}{\too}\ \SE' .
\end{equation}
This implies that $\Phi$ is induced by the line bundle $\SL$, as
described in the introduction. But, as we are working with the moduli
stack of fixed determinant, we need $\SL^r\,\cong \,\SO_{\SM}$, 
and then $\SL\,\cong\, \SO_{\SM}$ because
$\Pic(\SM)\,=\,\Pic(\SM^{\svb}(X,r,\xi))\,\cong\, \ZZ$ is torsionfree (Proposition \ref{prop:picm}).
\end{proof}

\begin{lemma}
\label{lemma:extendLineBundle}
Let $g\geq 3$. Let $\SL$ be any line bundle over $\SM^{\svb}(X,r,\xi)$. Then there exists a unique extension of $\SL$ to $\SM(X,r,\xi)$.
\end{lemma}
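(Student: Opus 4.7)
The plan is to reduce the statement to the classical fact that, on a smooth Noetherian scheme, a line bundle defined on the complement of a closed subset of codimension at least two extends uniquely across that locus. This is essentially the mechanism already used implicitly in Proposition \ref{prop:picm} to identify the Picard groups of $\SM(X,r,\xi)$ and $\SM^{\svb}(X,r,\xi)$, so the same inputs should suffice here.

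First I would record two ingredients: that $\SM(X,r,\xi)$ is a smooth algebraic stack locally of finite type, and that the closed complement $\SM(X,r,\xi) \setminus \SM^{\svb}(X,r,\xi)$ has codimension at least two. The codimension statement combines the lemmas already cited in Proposition \ref{prop:picm}, namely Lemma \ref{lemma:codimNotSemiStable} (bounding the codimension of each Harder--Narasimhan stratum of the non-semistable locus) and Lemma \ref{lemma:codimDiv} (handling the strictly semistable locus).

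Then I would choose a smooth scheme atlas $u\colon U_0 \to \SM(X,r,\xi)$, and note that $U_0^{\svb} := u^{-1}(\SM^{\svb}(X,r,\xi))$ is an open subscheme whose complement still has codimension at least two, since $u$ is flat and smooth. The pullback $u^*\SL$ is a line bundle on $U_0^{\svb}$, and it extends uniquely to a line bundle $\widetilde{\SL}_{U_0}$ on $U_0$ by the standard codimension-two extension theorem for line bundles on smooth Noetherian schemes. I would then run the same argument on the fiber products $U_1$ and $U_2$ of \eqref{eq:simplicial}: the gluing isomorphism $\sigma\colon q_0^* u^*\SL \to q_1^* u^*\SL$, viewed as a trivializing section of a line bundle on $U_1^{\svb}$, extends uniquely across the codimension-two complement in $U_1$, and the cocycle identity, which holds on the dense open $U_2^{\svb}$, persists on all of $U_2$ because two sections of a line bundle on a smooth scheme that agree on a dense open agree everywhere. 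Invoking Proposition \ref{prop:qcohx}, this extended descent datum descends to a line bundle on $\SM(X,r,\xi)$ whose restriction to $\SM^{\svb}(X,r,\xi)$ recovers $\SL$.

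Uniqueness follows by the same principle: any isomorphism between two extensions restricting to the identity on $\SM^{\svb}(X,r,\xi)$ pulls back on $U_0$ to an isomorphism over $U_0^{\svb}$ that extends uniquely to all of $U_0$, and is forced to be an isomorphism there because it is one on a dense open of a smooth scheme. The main obstacle is really making the codimension estimate uniform across all Harder--Narasimhan strata of $\SM(X,r,\xi)$, which is taken care of by the cited codimension lemmas; once these are in hand, the extension and descent steps are purely formal.
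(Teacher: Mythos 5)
Your argument is correct in substance, but it takes a genuinely different route from the paper. The paper's proof is a two-line consequence of Proposition \ref{prop:picm}: both $\Pic(\SM(X,r,\xi))$ and $\Pic(\SM^{\svb}(X,r,\xi))$ are infinite cyclic, generated by the determinantal line bundle, which visibly lives on the whole stack; hence every line bundle on the stable locus is a power of it and extends, uniquely because the restriction map on Picard groups is injective. You instead unpack the mechanism that Proposition \ref{prop:picm} itself relies on --- smoothness of the stack, codimension $\ge 2$ of the unstable locus, and Hartogs-type extension of line bundles and of their gluing data along a smooth atlas, followed by descent via Proposition \ref{prop:qcohx}. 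Your version is more self-contained and more general (it never uses that the Picard group is $\ZZ$), at the cost of being longer; the paper's version is shorter but leans on the explicit Picard computation. Two small points to tighten. First, when you extend the gluing isomorphism $\sigma$ over $U_1$ and when you argue that an isomorphism of two extensions over $U_0^{\svb}$ remains an isomorphism over $U_0$, density of the open set is not the right reason: a morphism of line bundles can be an isomorphism on a dense open without being one globally. The correct argument is that the degeneracy locus of the extended section of $\mathcal{H}om(q_0^*\widetilde{\SL},q_1^*\widetilde{\SL})$ (respectively of $\widetilde{\SL}_1^{\vee}\otimes\widetilde{\SL}_2$) is either empty or of pure codimension one, and it is contained in the codimension $\ge 2$ complement, hence empty. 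Second, the codimension bounds of Lemmas \ref{lemma:codimNotSemiStable} and \ref{lemma:codimDiv} only hold on the charts $\Div_X^{r,d}(D)$ with $\deg(D)$ sufficiently large (and, for fixed determinant, after the fibered-product modification of Lemma \ref{lemma:2approximation}); you should say that these charts still cover the whole stack, so that the atlas can be chosen with the required codimension property on every component.
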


\begin{proof}
The Picard groups of $\SM(X,r,\xi)$ and $\SM^{\svb}(X,r,\xi)$ are both $\ZZ$ (Proposition \ref{prop:picm}) and they are both generated by the determinantal line bundle,
which clearly extends to the whole moduli stack. Thus, any line bundle (which must be a multiple of the determinantal line bundle) extends uniquely.
\end{proof}

\section{Extension of morphisms to the complete moduli stack}
\label{section:extension}

Let $\SK$ be the field of rational functions of $X$. A divisor of rank $r$ and degree $d$ is a coherent sub $\SO_X$-module of $\SK^{\oplus r}$ of rank $r$ and degree $d$. The set of 
such divisors form an ind-scheme denoted by $\Div_X^{r,d}$ which is an atlas for the moduli stack of vector bundles $\SM(X,r,d)$. Each divisor $E\,\in\,
 \Div_X^{r,d}$ is a submodule of $\SO_X(D)^{\oplus r}$ for some effective divisor $D$. Let
$$\Div_X^{r,d}(D)\ =\ \{E\,\in\, \Div_X^{r.d} \,\, \big\vert \,\, E\,\subset\, \SO_X(D)^{\oplus r}\}.$$
Then
$$\Div_X^{r,d}\ =\ \bigcup_{D\ge 0} \Div_X^{r,d}(D).$$
Take any $D\,\ge\, 0$. Then $\Div_X^{r,d}(D)$ is isomorphic to the quot scheme 
$\Quot_{\SO_X(D)^r/X}^{r\deg(D)-d}$ and, clearly, if $D\,\le\, D'$ then there is a closed immersion 
$\Div_X^{r,d}(D)\,\hookrightarrow \,\Div_X^{r,d}(D')$, giving $\Div_X^{r,d}$ the above mentioned structure of 
ind-scheme. (See \cite{BGL94}.)

For each $D$, the scheme $\Div_X^{r.d}(D)$ is stratified by the Harder-Narasimhan type of the bundle. Recall that the Harder-Narasimhan
filtration of a vector bundle $E$ is a filtration of subbundles
$$0\,=\,E_0 \,\subsetneq\, E_1 \,\subsetneq \, \cdots \,\subsetneq\, E_k\,=\,E$$
such that $E_i/E_{i-1}$ is a stable vector bundle for all $1\, \leq\, i\, \leq\, k$ with
$$\frac{\rk(E_1/E_0)}{\deg(E_1/E_0)}\,>\,\frac{\rk(E_2/E_1)}{\deg(E_2/E_1)}\,>\,\ldots \,>\,\frac{\rk(E_k/E_{k-1})}{\deg(E_k/E_{k-1})}.$$
The sequence of points $(r_i,\, d_i)\,=\,\left( \rk(E_i/E_{i-1}),\, \deg(E_i/E_{i-1})\right)$ of ${\mathbb R}^2$ forms a strictly convex polygon called the Shatz
polygon for $E$. Let $\SP_{r,d}$ denote the set of possible Shatz polygons for a rank $r$ degree $d$ vector bundle; so $\SP_{r,d}$ is the set of strictly convex polygons
joining $(0,\,0)$ and $(r,\,d)$. For each $P\,\in \,\SP_{r,d}$, let $F_P(D)\,\subset \,\Div_X^{r,d}(D)$ denote the subset of divisors with Shatz polygon $P$. Then
$\Div_X^{r,d}(D)$ splits as follows:
$$\Div_X^{r,d}(D)\ =\ \coprod_{P\in \SP_{r,d}} F_P(D).$$
Since $\Div_X^{r,d}(D)$ represents a bounded family of vector bundles, the set of possible Shatz polynomials for the bundles in $\Div_X^{r,d}(D)$ is finite. Let
$\SP_{r,d}(X,D)$ denote this finite subset of $\SP_{r,d}$. Then
$$\Div_X^{r,d}(D) \ =\ \coprod_{P\in \SP_{r,d}(X,D)} F_P(D).$$

\begin{lemma}
\label{lemma:codimNotSemiStable}
Suppose that $g\,\ge\, 2$. Then there exists a constant $C(X,r,d)\,>\,0$
depending only on the curve, the rank $r$ and the degree $d$, such that
for each divisor $D$ with $\deg(D)\,\ge\, C(g,r,d)$ the codimension of the
locus of bundles in $\Div_{X}^{r,d}(D)$ which are not semistable is at least $2$.
\end{lemma}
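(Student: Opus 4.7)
The plan is to stratify $\Div_X^{r,d}(D)$ by Harder--Narasimhan type and, uniformly for $D$ large, bound from below the codimension of every non-semistable stratum. Since $\SP_{r,d}(X,D)$ is finite, it is enough to show that for every non-trivial Shatz polygon $P \,=\, ((r_1,d_1),\ldots,(r_k,d_k))$ with $k \,\ge\, 2$, we have $\codim_{\Div_X^{r,d}(D)} F_P(D) \,\ge\, 2$.

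The key input is the Atiyah--Bott/Shatz codimension formula for the HN stratum $F_P$ in the moduli stack,
\[
\codim_{\SM(X,r,d)}(F_P) \ =\ \sum_{1 \le i < j \le k}\bigl(r_j d_i - r_i d_j + r_i r_j(g-1)\bigr).
\]
Each summand equals $r_i r_j(\mu_i - \mu_j) + r_i r_j(g-1)$; the first piece is a positive integer (hence $\ge 1$) by the strict HN inequality $\mu_i \,>\, \mu_j$, and the second is $\ge 1$ because $g \,\ge\, 2$. Thus every summand is at least $2$, and $\codim_{\SM}(F_P) \,\ge\, 2$ for any non-trivial $P$. To transfer this bound to $\Div_X^{r,d}(D)$, I would use the natural forgetful morphism $\pi_D\,\colon\, \Div_X^{r,d}(D) \,\to\, \SM(X,r,d)$, whose scheme-theoretic fiber over $[E]$ is open inside $\Hom(E,\SO_X(D)^r) \,=\, H^0(E^*(D))^{\oplus r}$; hence $\dim F_P(D) \,=\, \dim F_P^{\op{stack}} + r\cdot h^0(E^*(D))$ at a generic point. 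Semistability of $\SO_X(D)^r$ forces $\mu_{\min}(\SO_X(D)^r/E') \,\ge\, D$ for every subsheaf $E'$, and a tensor-slope estimate combined with Serre duality yields $H^1(E^*(D)) \,=\, 0$ as soon as $D \,>\, \mu_{\max}(E) + 2g-2$. For HN types whose top slope $\mu_1(P)$ is bounded by an absolute constant, this gives $\codim_{\Div_X^{r,d}(D)} F_P(D) \,=\, \codim_{\SM} F_P \,\ge\, 2$ once $D$ is larger than an explicit threshold.

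The main obstacle is the regime of HN types with $\mu_1(P)$ comparable to $D$, where $H^1(E^*(D))$ need not vanish and the codimensions in Quot and in the stack may differ. In this case a Clifford-type upper bound controls the correction $r\cdot h^1(E^*(D))$ by a quantity depending only on $r$ and $g$, while the Atiyah--Bott formula already contains a single summand of the form $r_1 r(\mu_1(P) - d/r) + r_1(r-r_1)(g-1)$ growing linearly with $\mu_1(P)$. Hence for $D \,\ge\, C(X,r,d)$ with $C(X,r,d)$ chosen large enough, the Atiyah--Bott codimension exceeds the Clifford correction by at least $2$, giving $\codim_{\Div_X^{r,d}(D)} F_P(D) \,\ge\, 2$ in this regime as well. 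Taking the union of the finitely many strata in $\SP_{r,d}(X,D)$ then yields the desired codimension bound for the non-semistable locus.
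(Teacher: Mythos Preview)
Your approach is essentially the paper's: both stratify $\Div_X^{r,d}(D)$ by Harder--Narasimhan type and split into two regimes according to whether the top HN slope is small relative to $\deg D$ (so the stack codimension formula transfers verbatim) or comparable to $\deg D$ (where a linear-in-$\deg D$ lower bound takes over). The paper packages both regimes into a single citation of \cite[Proposition~5.2]{BGL94}, which gives the exact codimension formula above a slope threshold and the linear lower bound below it; you instead rederive those estimates via the Atiyah--Bott formula on $\SM(X,r,d)$ and a fiber-dimension comparison along $\Div_X^{r,d}(D)\to\SM(X,r,d)$, with your uniform bound $h^1(E^\vee(D))\le rg$ (coming from $E\otimes K_X(-D)\subset K_X^{\oplus r}$) playing the role of what you call the Clifford correction.
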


\begin{proof}
Let $\SP_{r,d}'(X,D)\,:=\,\SP_{r,d}(X,D)\backslash \{ \left( (0,0),(r,d) \right)\}$ be the set of types which
are not semistable. Then the locus of non-semistable
vector bundles in $\Div_{X}^{r,d}(D)$ is
$$\Div_X^{r,d,ns}(D)\ =\ \coprod_{P\in \SP_{r,d}'(X,D)} F_P(D).$$
As this is a finite union of closed subschemes, it is enough to prove that $\op{codim}(F_P(D))\,\ge\, 2$ for each $P\,\in\, \SP_{r,d}'(X,D)$.

Let $P\,=\,\left ((0,\,0),\,\cdots,\,(r_i,\, d_i)\right)$ be a polygon corresponding to a non-semistable type in $\SP_{r,d}'(X,D)$.
By \cite[Proposition 5.2]{BGL94}, if $\deg(D)\,>\, \frac{d-d_i}{r-r_i}+2g-1$ then
$$\op{codim}(F_P(D))\ =\ \sum_{i>j} (r-r_i)(r-r_j) \left( \frac{d-d_j}{r-r_j} - \frac{d-d_i}{r-r_i} +g-1\right).$$
As $P$ forms a strictly convex polygon joining a $(0,\,0)$ with $(r,\,d)$, the sequence of slopes $\frac{d-d_i}{r-r_i}$ is strictly decreasing. Thus
$$\op{codim}(F_P(D))\, =\, \sum_{i>j} (r-r_i)(r-r_j) \left( \frac{d-d_j}{r-r_j} - \frac{d-d_i}{r-r_i} +g-1\right)>(g-1)\sum_{i>j}(r-r_i)(r-r_j)\, \ge\, g-1.$$
Since the codimension is an integer, we have $\op{codim}(F_P(D))\,\ge\, g \,\ge\, 2$.

On the other hand, again by \cite[Proposition 5.2]{BGL94}, if $\deg(D)\,\le\, \frac{d-d_i}{r-r_i}+2g-1$, then
$$\op{codim}(F_P(D))\ \ge\ \deg(D)-C$$
for some constant $C$ which only depends on $X$, $r$ and $d$, but neither on $P$ nor on the divisor $D$. Thus, if we assume that
$$\deg(D)\ \ge\ C+2,$$
then the codimension of $F_P(D)$ in $\Div_X^{r,d}(D)$ is at least $2$ in all the cases.
\end{proof}

\begin{lemma}\label{lemma:codimDiv}
Suppose that $g\,\geq\, 3$. Then there exists a constant $C(X,r,d)\,>\,0$
depending only on the curve, the rank $r$ and the degree $d$ such that
for each divisor $D$ with $\deg(D)\,\ge\, C(g,r,d)$ the codimension of the
locus of vector bundles in $\Div_{X}^{r,d}(D)$ which are not stable is at least $2$.
\end{lemma}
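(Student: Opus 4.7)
The claim strengthens Lemma \ref{lemma:codimNotSemiStable} by requiring codimension at least $2$ for the non-\emph{stable} locus, not just the non-semistable one. The plan is to decompose
$$\Div_X^{r,d}(D)\setminus \Div_X^{r,d,\svb}(D)\ =\ \Div_X^{r,d,ns}(D)\,\cup\,\Div_X^{r,d,\op{sss}}(D),$$
where $\Div_X^{r,d,\op{sss}}(D)$ denotes the strictly semistable locus (semistable but not stable). Lemma \ref{lemma:codimNotSemiStable} already handles the non-semistable part: under the hypothesis $g\ge 3$ its codimension is at least $g-1\ge 2$ once $\deg(D)$ is large enough, so the work is entirely in the strictly semistable part.

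To bound $\Div_X^{r,d,\op{sss}}(D)$, I would stratify by the numerical invariants of a proper subbundle of the same slope. There are only finitely many pairs $(r',d')$ with $1\le r'<r$ and $d'/r'=d/r$ (these force $r\mid r'd$), and for each such pair let
$$S_{r',d'}(D)\ \subset\ \Div_X^{r,d}(D)$$
denote the closed locus of divisors $E\subset \SO_X(D)^r$ whose underlying bundle admits a rank $r'$ subbundle of degree $d'$. Every strictly semistable bundle has some such subbundle (e.g.\ from its Jordan--H\"older filtration), so
$$\Div_X^{r,d,\op{sss}}(D)\ \subset\ \bigcup_{(r',d')} S_{r',d'}(D),$$
a finite union. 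Thus it suffices to bound the codimension of each $S_{r',d'}(D)$.

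To do so, I would parametrize $S_{r',d'}(D)$ via a ``flag Quot'' scheme $\op{Flag}(D)$ of inclusions $E'\subset E\subset \SO_X(D)^r$ with the prescribed ranks and degrees, equipped with the two projections $\pi_1:\op{Flag}(D)\to \Div_X^{r,d}(D)$ (forgetting $E'$) and $\pi_2:\op{Flag}(D)\to \Div_X^{r',d'}(D)$ (forgetting $E$). The image of $\pi_1$ is exactly $S_{r',d'}(D)$, and its fibers over a point $[E']$ can be identified (after composing with $\pi_2$) with a Quot scheme of subsheaves of $\SO_X(D)^r/E'$. For $\deg(D)$ sufficiently large, all the relevant Quot schemes are smooth of their expected dimension and the standard Riemann--Roch computation underlying \cite[Proposition 5.2]{BGL94} applies to give dimensions of $\op{Flag}(D)$, $\Div_X^{r,d}(D)$, and $\Div_X^{r',d'}(D)$ explicitly. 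Because the two slopes now coincide, the ``strict convexity'' contribution to the codimension formula vanishes, but the intrinsic ``$g-1$'' term survives and yields
$$\codim_{\Div_X^{r,d}(D)}\, S_{r',d'}(D)\ \ge\ (g-1)\,r'(r-r')\ \ge\ 2(g-1)\ \ge\ 4$$
whenever $g\ge 3$.

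The main obstacle is packaging this codimension count cleanly: one must verify that the flag Quot scheme is smooth of the expected dimension for $\deg(D)$ large, and track the $\Hom(E',E/E')$ contribution to the fibers of $\pi_1$ using slope equality and Serre duality. Once this bookkeeping is done, taking $C(X,r,d)$ to be the maximum of the constant from Lemma \ref{lemma:codimNotSemiStable} and the bound ensuring the above Quot/flag schemes behave as expected gives codimension at least $2$ on both strata of the non-stable locus, completing the proof.
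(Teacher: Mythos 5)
Your proposal is correct in outline but takes a genuinely different route from the paper for the strictly semistable stratum. Both arguments make the same first move (split the non-stable locus into the non-semistable part, killed by Lemma \ref{lemma:codimNotSemiStable}, and the strictly semistable part), but where you stratify by the numerical type $(r',d')$ of a \emph{single} same-slope subbundle and bound the image of a two-step flag Quot scheme $E'\subset E\subset \SO_X(D)^{\oplus r}$, the paper stratifies by the full Jordan--H\"older type $\{(\deg Q_j,\rk Q_j, a_j)\}$, builds an explicit parameter scheme of iterated extensions over moduli spaces (or the \'etale covers of Lemma \ref{lem:covering}) of the stable factors, and quotes the dimension count from \cite[Lemma 2.3]{BGM10} to get $\dim S\le \dim M(X,r,d)-(r-1)(g-1)$ before adding the $\Hom(E,\SO_X(D)^{\oplus r})$ directions. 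Your version is more economical (no need for tautological families on moduli of stable bundles of the JH factors), but it puts all the weight on the asserted inequality $\codim S_{r',d'}(D)\ge (g-1)r'(r-r')$, which is not literally the BGL formula: in the equal-slope case $\Hom(E/E',E')$ need not vanish, so \cite[Proposition 5.2]{BGL94} does not apply as stated. The count still goes through because the extension/flag space has dimension governed by $\ext^1(E/E',E')-\hom(E/E',E')=-\chi(E/E',E')=r'(r-r')(g-1)$ exactly (the $\hom$ term enters with a favorable sign), but this is the step you would actually have to write out, and it is the only real content of the lemma.

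One concrete slip: your chain $(g-1)r'(r-r')\ \ge\ 2(g-1)\ \ge\ 4$ is false for $r=2$, where $r'(r-r')=1$ and the bound is only $g-1$. The correct conclusion is $\min_{r'}r'(r-r')(g-1)=(r-1)(g-1)\ge g-1\ge 2$, matching the paper's bound, and this is precisely where the hypothesis $g\ge 3$ is used (your inequality as written would misleadingly suggest $g\ge 2$ suffices, which it does not in rank $2$).
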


\begin{proof}
Let $Z^{\svb}\,\subset\, Z^{\ssvb}\,\subset\, \Div_{X}^{r,d}(D)$ be the (open) 
subschemes of points for which the underlying vector bundle is 
stable ($Z^{\svb}$) or semistable ($Z^{\ssvb}$); openness follows from \cite[p.~635, Theorem 2.8]{Ma}.

We need to estimate the dimension of $\Div_{X}^{r,d}(D)\setminus Z^{\svb}$. This set
is the disjoint union of $\Div_{X}^{r,d}(D)\setminus Z^{\ssvb}$ and 
$Z^{\ssvb}\setminus Z^{\svb}$.
By Lemma \ref{lemma:codimNotSemiStable},
the dimension of the first set satisfies the condition
$$
\dim \Div_{X}^{r,d}(D)\setminus Z^{\ssvb} \ \leq\ \dim \Div_{X}^{r,d}(D) -2.
$$
Therefore, to prove the lemma it is enough to show that
\begin{equation}\label{es1}
\dim Z^{\ssvb}\setminus Z^{\svb}\ \leq\ \dim \Div_{X}^{r,d}(D) -2.
\end{equation}

We will prove \eqref{es1} by constructing a scheme which parametrizes all divisors
of rank $r$ whose underlying vector bundle is strictly semistable.

Let $E$ be a strictly semistable bundle, and let
$$
0\,=\,E_0\,\subset\, E_1 \,\subset\, E_2\,\subset\, \cdots\, \subset\, E_l\,=\,E
$$
be the Jordan-H\"older filtration of $E$ \cite[p.~22, Definition 1.5.1]{HLe},
\cite[p.~22, Proposition 1.5.2]{HLe}. The successive quotients $Q_i\,=\,E_i/E_{i-1}$
are stable and have the same slope as $E$. The associated graded polystable vector bundle is
$$
{\rm gr}(E)\ :=\ \bigoplus_{i=1}^{l}Q_i.
$$
Combining the direct summands which are isomorphic, we can write
$$
{\rm gr}(E)\ =\ \bigoplus_{j=1}^{k}Q_j^{\oplus a_j},
$$
with $Q_j\,\not\cong\, Q_{j'}$ if $j\,\neq\, j'$.
The collection of triples $\{(\deg Q_j,\,\rk Q_j),\, a_j)\}_{1\leq j\leq k}$ 
is called the type of the semistable vector
bundle $E$. Note that there is only a finite number of types for all semistable bundles of fixed rank and degree.
To obtain a scheme parametrizing all semistable
bundles of a given type, we first take a space parametrizing the stable vector bundles 
$Q_j$. If the degree and rank of $Q_j$ are coprime, we take the moduli
space of stable vector bundles having that degree and rank. If they are not
coprime, then we take the \'etale covering given in Lemma \ref{lem:covering}.
Over the scheme parametrizing the quotients $Q_j$, we construct a scheme
parametrizing the successive extensions
$$
0\ \too\ E_{i-1}\ \too\ E_i\ \too\ Q_i\ \too\ 0.
$$
To estimate the dimension of the space of extensions, we use Riemann-Roch
and the fact that the dimension of the space of homomorphisms between two 
stable vector bundles of the same slope is one if they are isomorphic, and it
is zero if they are not.

The details of the computation are in the proof of \cite[Lemma 2.3]{BGM10},
and the result is that the dimension of $S$ satisfies the inequality
$$
\dim S\ \leq\ \dim M(X,r,d)-(r-1)(g-1).
$$
Note that the scheme $S$ constructed in this way parametrizes a tautological
family.

Using the fact that the set of semistable bundles is bounded, we can
find a constant $C(g,r,d)$ such that, if $\deg(D)\,>\,C(g,r,d)$ 
then $\Ext^1(E,\,\SO_X(D))\,=\,0$ for all semistable vector bundle $E$, and then
$\dim \Hom(E,\,\SO_X(D)^{\oplus r})$ is the same for all semistable vector
bundles $E$ and can be calculated by the Riemann-Roch
formula. We can then construct a scheme $\widetilde{S}$ which parametrizes
all divisors of rank $r$ with strictly semistable bundle $E$ and has
\begin{equation}\label{es2}
\dim \widetilde{S}\,\leq\, \dim M(X,r,\xi)+\dim\Hom(E,\SO_X(D)^{\oplus r})-1-(r-1)(g-1).
\end{equation}
Since $\widetilde{S}$ parametrizes all strictly semistable divisors of rank
$r$, we have
\begin{equation}\label{es3}
\dim Z^{\ssvb}\setminus Z^{\svb}\ \leq\ \dim \widetilde{S}.
\end{equation}
Now note that \eqref{es2} and \eqref{es3} together imply \eqref{es1}.
This completes the proof.
\end{proof}

As a direct consequence of Lemma \ref{lemma:codimDiv} it can be shown that every vector bundle can be 
approximated by a smooth 2-dimensional family of stable vector bundles; this is done below.

\begin{corollary}
\label{cor:2approximation}
For any vector bundle $E$ on a curve $X$ of genus $g\,\ge\, 3$, there exists a family of vector bundles $\SE$ on $X\times T$ for a certain smooth 2-dimensional scheme
$T$ and a point $t_0\,\in\, T$ such that $\SE\big\vert_{X\times \{t\}}$ is stable for each $t\,\neq\, t_0$ and $\SE\big\vert_{X\times \{t_0\}}\,\cong\, E$.
\end{corollary}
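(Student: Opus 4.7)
The plan is to realize $E$ as a smooth point of the atlas $\Div_X^{r,d}(D)$ for sufficiently positive $D$, and then to use Lemma \ref{lemma:codimDiv} to cut down to a $2$-dimensional slice through $[E]$ avoiding the non-stable locus elsewhere. Setting $d=\deg E$, I would first choose an effective divisor $D$ large enough that (i) there is an injection $E \hookrightarrow \SO_X(D)^{\oplus r}$, which can be obtained by dualizing a generic $r$-tuple of global sections of $E^\vee \otimes \SO_X(D)$ (globally generated for $D$ sufficiently positive), and (ii) $\deg D \ge C(X,r,d)$, the constant of Lemma \ref{lemma:codimDiv}. This exhibits $E$ as a point $[E] \in \Div_X^{r,d}(D) = \Quot^{r\deg D - d}_{\SO_X(D)^{\oplus r}/X}$.

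Next, I would verify that $\Div_X^{r,d}(D)$ is smooth at $[E]$. Since $E$ has full rank, the quotient $Q := \SO_X(D)^{\oplus r}/E$ is a torsion sheaf on $X$. The obstruction to smoothness of the Quot scheme at $[E]$ lies in $\Ext^1_X(E, Q) \cong H^1(X, E^\vee \otimes Q)$, which vanishes because $E^\vee \otimes Q$ is a torsion sheaf on the curve $X$. Hence $\Div_X^{r,d}(D)$ is smooth of some dimension $N$ at $[E]$. By Lemma \ref{lemma:codimDiv}, the non-stable locus $Z \subset \Div_X^{r,d}(D)$ has codimension at least $2$.

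I would then construct $T$ by a general-position argument. In an étale neighborhood $U$ of $[E]$ smooth of dimension $N$ with local coordinates $x_1,\ldots,x_N$ vanishing at $[E]$, I pick $N-2$ generic linear combinations of the $x_i$; their vanishing locus $T' \subset U$ is smooth of dimension $2$ near $[E]$, and a standard dimension count gives $\dim_{[E]}(T' \cap Z) \le 2+(N-2)-N = 0$, so $T' \cap Z$ is finite near $[E]$. Shrinking $T'$ to a connected open neighborhood of $[E]$ and deleting the finitely many points of $(T' \cap Z)\setminus\{[E]\}$ yields a connected smooth $2$-dimensional scheme $T$ with $T\cap Z = \{[E]\}$. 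Finally, restricting the universal subsheaf $\SE_{\op{univ}} \subset \pi_X^*\SO_X(D)^{\oplus r}$ on $X \times \Div_X^{r,d}(D)$ to $X \times T$ yields a flat family of rank-$r$ torsion-free, hence locally free, sheaves with $\SE\big\vert_{X\times\{[E]\}} \cong E$ and $\SE\big\vert_{X\times\{t\}}$ stable for every $t\ne [E]$, as required.

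The main obstacle is the general-position cut producing $T$, but once the codimension-$2$ estimate of Lemma \ref{lemma:codimDiv} is in hand, this reduces to an elementary dimension argument on the smooth scheme $\Div_X^{r,d}(D)$, so no serious difficulty remains beyond bookkeeping.
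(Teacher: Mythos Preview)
Your proposal is correct and follows essentially the same route as the paper's proof: embed $E$ in $\SO_X(D)^{\oplus r}$ for $D$ sufficiently large, use smoothness of $\Div_X^{r,d}(D)$ together with the codimension-$2$ estimate of Lemma~\ref{lemma:codimDiv}, and then cut down to a smooth $2$-dimensional slice through $[E]$ meeting the non-stable locus only at $[E]$. The only differences are cosmetic: the paper cites \cite{BGL94} for the smoothness of the Quot scheme (in place of your direct obstruction computation via $\Ext^1(E,Q)=0$) and phrases the slicing step more tersely as ``passing to an \'etale cover, if necessary''.
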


\begin{proof}
If $E$ is stable, then the statement is trivial. Otherwise, take a divisor $D$ of big enough degree so that $E$ embeds into $\SO_X(D)^{\oplus r}$ and such that $\deg(D)\,
\ge C(X,r,d)$, where 
$C(X,r,d)$ is the constant of Lemma \ref{lemma:codimDiv}. Then $E$ represents a non-stable point in $\Div_X^{r,d}(D)$. As $\Div_X^{r,d}(D)$ is smooth (see, for instance, \cite[p. 
6]{BGL94}), and the codimension of the non-stable locus is at least $2$ by Lemma \ref{lemma:codimDiv}, passing to an \'etale cover, if necessary, we can find a 2-dimensional smooth 
scheme $T$ mapping to a subset of $\Div_X^{r,d}$ crossing transversely the locus of non-stable bundles precisely at $E$. The pullback of the universal bundle on $\Div_X^{r,d}$ to $T$ 
gives the desired family.
\end{proof}

We shall now adapt the previous idea to work on arbitrary families of vector bundles with fixed determinant.

\begin{lemma}
\label{lemma:2approximation}
Let $\SE$ be a family of vector bundles on $X$ --- with $g\,=\, {\rm genus}(X) \, \geq\, 3$ --- of
rank $r$ and determinant $\xi$ of degree $d$ parameterized by a scheme $T$. Then there
exists a scheme $Q$ together with a vector bundle $\SF$ on $X\times Q$ and a morphism of schemes
$\tau\,:\,T\,\longrightarrow\, Q$ such that
\begin{itemize}
\item $Q$ is smooth,

\item the locus $Q^{ns}$ of points $q\,\in\, Q$ such that $\SF\big\vert_{X\times\{q\}}$ is non-stable has
codimension at least 2 in all the irreducible components of $Q$,

\item $\det(\SF)\,\cong\, \pi_X^* \xi$, where $\pi_X\,:\,X\times Q\,\longrightarrow\, X$ is the natural projection, and

\item $\SE\ \cong\ (\id_X\times \tau)^*\SF$.
\end{itemize}
\end{lemma}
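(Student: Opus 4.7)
I adapt the strategy of Corollary~\ref{cor:2approximation} to the family setting, realizing $\SE$ as the pullback of the universal subsheaf on the fiber over $\xi$ of a determinant morphism on an appropriate Div scheme.

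First, I would choose a divisor $D$ on $X$ with $\deg(D)$ sufficiently large so that: (i) $\deg(D)\ge C(X,r,d)$, the constant from Lemma~\ref{lemma:codimDiv}; (ii) Serre vanishing holds in the family, namely $R^1\pi_{T,*}(\SE\otimes \pi_X^*\SO_X(D))=0$, the pushforward $V:=\pi_{T,*}(\SE\otimes \pi_X^*\SO_X(D))$ is locally free of rank $N$ on $T$, and the evaluation $\pi_T^*V\to \SE\otimes \pi_X^*\SO_X(D)$ is surjective. All conditions can be achieved simultaneously for $\deg(D)$ large by uniform Serre vanishing on the quasi-compact base.

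Next I would set $Q:=\det^{-1}(\xi)\subset \Div_X^{r,d}(D)$, the fiber over $\xi$ of the determinant morphism $\det\colon \Div_X^{r,d}(D)\to \Pic^d(X)$. The ambient scheme $\Div_X^{r,d}(D)$ is smooth, and for $\deg(D)$ large the derivative of $\det$ at any $E$ is surjective, being the composition of the surjection $\Hom(E,\SO_X(D)^{\oplus r}/E)\twoheadrightarrow \Ext^1(E,E)$ (from the Ext long exact sequence of $0\to E\to \SO_X(D)^{\oplus r}\to Q_E\to 0$, using the Serre-duality vanishing $\Ext^1(E,\SO_X(D)^{\oplus r})=0$) with the always-surjective trace $\Ext^1(E,E)\to H^1(X,\SO_X)$. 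Hence $\det$ is smooth and $Q$ is smooth of codimension $g$. For the codimension-$2$ bound on $Q^{ns}$: Lemma~\ref{lemma:codimDiv} gives codimension $\ge 2$ in $\Div_X^{r,d}(D)$, and since the non-stable locus is invariant under the $\Pic^0(X)$-action of tensoring with line bundles---whose $g$-dimensional orbits are transverse to the fibers of $\det$, because $L\mapsto L^r$ is a finite surjection on $\Pic^0(X)$---the codimension is preserved on restriction to $Q$.

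The main obstacle is constructing $\tau\colon T\to Q$ together with an isomorphism $\SE\cong (\id_X\times \tau)^*\SF$, where $\SF\subset \pi_X^*\SO_X(D)^{\oplus r}$ is the universal subsheaf on $X\times Q$ (which has $\det \SF\cong \pi_X^*\xi$ by construction). Defining $\tau$ amounts to producing a fiberwise generically injective embedding $\SE\hookrightarrow \pi_X^*\SO_X(D)^{\oplus r}$ on $X\times T$, equivalently a generically injective section of the rank-$rN$ bundle $W^{\oplus r}$ on $T$, with $W:=\pi_{T,*}(\SE^\vee\otimes \pi_X^*\SO_X(D))$. Local existence on any \'etale cover where $V$ trivializes is immediate, but global existence on $T$ requires an enlargement of $Q$. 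I would enlarge $Q$ by incorporating framing data for $V$: via an appropriate $\GL_N$-equivariant construction---for instance, the absolute Quot scheme of rank-$r$ vector-bundle quotients of $\SO_X(-D)^{\oplus N}$ with fixed determinant $\xi$ combined with the frame-bundle data of $V$---one obtains a smooth scheme on which the canonical evaluation $\pi_T^*V\to \SE\otimes \pi_X^*\SO_X(D)$ yields a well-defined morphism from $T$; the codimension-$2$ property persists because framing contributes a smooth fibration of constant relative dimension, uniform over the stable and non-stable loci. The technical heart, and the main obstacle, is carrying out this enlargement carefully so as to preserve both the smoothness of $Q$ and the codimension-$2$ property of the non-stable locus while yielding a global morphism $\tau$ (rather than merely a morphism from the frame bundle).
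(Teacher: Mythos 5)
Your proposal diverges from the paper at the construction of $Q$, and both of the points where it diverges create real problems. First, your codimension argument for $Q^{ns}\subset Q=\det^{-1}(\xi)$ does not work as stated: $\Pic^0(X)$ does not act on $\Div_X^{r,d}(D)$, because tensoring an inclusion $E\subset \SO_X(D)^{\oplus r}$ with $L$ produces a subsheaf of $(\SO_X(D)\otimes L)^{\oplus r}$, not of $\SO_X(D)^{\oplus r}$, so there are no ``$g$-dimensional orbits transverse to the fibers of $\det$'' inside this scheme. To get codimension $\ge 2$ on the fiber you would instead have to show that $\det$ restricted to each Harder--Narasimhan stratum is equidimensional over $J(X)$, which is an extra argument you have not supplied. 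Relatedly, on $\det^{-1}(\xi)$ the universal subsheaf has determinant isomorphic to $\xi$ only fiberwise; globally it is $\pi_X^*\xi\otimes\pi_Q^*N$ for some line bundle $N$ on $Q$, and killing $N$ requires extracting an $r$-th root. The paper sidesteps all three issues at once by taking $Q$ to be the fibered product $\Div_X^{r,d}(D)\times_{J(X)}J(X)$ along $\delta_\xi(E)=\det(E)\otimes\xi^{-1}$ and the isogeny $\rho_r(L)=L^r$: this $Q$ is finite \'etale over the whole of $\Div_X^{r,d}(D)$, so smoothness and the codimension-$2$ bound of Lemma \ref{lemma:codimDiv} are inherited with no transversality argument, and the universal family $\SF=\pi_1^*\SE_{\op{univ}}\otimes\pi_2^*\SL_{\op{univ}}^{-1}$ has determinant $\pi_X^*\xi$ because the second factor records precisely the needed $r$-th root. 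Your tangent-space computation showing $\det$ is smooth is correct but becomes unnecessary in that setup.

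Second, and more seriously, your proof stops at what you yourself call the main obstacle: you never actually produce the morphism $\tau$, only a programme (framing data, an equivariant Quot construction) whose execution you defer. The paper's route here is simply to fix, once and for all, a single embedding $\SE\subset\pi_X^*\SO_X(D)^{\oplus r}$ over all of $X\times T$ (justified by boundedness of the family), which immediately gives $f:T\to\Div_X^{r,d}(D)$, and then to observe that because $\det\SE\cong\pi_X^*\xi$ the map $f$ lifts canonically to $\tau:T\to Q$ by taking the trivial $r$-th root $L=\SO_X$; the identity $(\id_X\times(\tau\circ\pi_2))^*\SL_{\op{univ}}=\SO_{X\times T}$ then gives $(\id_X\times\tau)^*\SF\cong\SE$. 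You are right that the global existence of such an embedding over an arbitrary base is the delicate point, but a proof must either carry out your proposed enlargement of $Q$ in detail (and verify that it still satisfies the smoothness and codimension conditions) or justify the global embedding directly; as written, the proposal establishes neither, so the statement is not proved.
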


\begin{proof}
Since $\SE$ is a bounded family, there exists a divisor $D$ such that $\SE\,\subset\, \pi_X^* \SO_X(D)^{\oplus r}$. Choose $D$ such that $\deg(D)
\,\ge\, C(X,r,d)$, where $C(X,r,d)$ is the bound given by Lemma \ref{lemma:codimDiv} (here we need $g\geq 3$). Fix once and for all one such embedding. Then, it
defines a map $f\,:\,T\,\longrightarrow \,\Div_X^{r,d}(D)$ such that $\SE\,\cong\, f^* \SE_{\op{univ}}$, where $\SE_{\op{univ}}$ is the universal vector
bundle on $\Div_X^{r,d}(D)$. Let us consider the map $\delta_\xi\,:\,\Div_X^{r,d}(D) \,\longrightarrow \,J(X)$ given by
$$\delta_\xi(E)\ :=\ \det(E) \otimes \xi^{-1}\, .$$
The preimage of $\SO_X$ is the set of divisors with determinant $\xi$. Let $\rho_r\,:\,J(X)\,\longrightarrow\, J(X)$ be the isogeny
$$\rho_r(L)\ =\ L^r,$$
and let $Q$ be the fibered product
\[
\xymatrix{
Q\,=\,\Div_X^{r,d}(D) \times_{J(X)} J(X) \ar[rr]^-{\pi_2} \ar[d]_{\pi_1} && J(X) \ar[d]^{\rho_r}\\
\Div_X^{r,d}(D) \ar[rr]^{\delta_\xi} && J(X).
}
\]
The variety $\Div_X^{r,d}(D)$ is isomorphic to the quot scheme
$\Quot_{\SO_X(D)^r/X}^{r\deg(D)-d}$, which, tensored by $\SO_X(-D)$,
is isomorphic to a quot scheme of $\SO_X^r$ and, therefore, it is
smooth and irreducible (see, for instance, \cite{EL}). 
The scheme $Q$ is smooth because base change of a smooth 
morphism is smooth \cite[Proposition III 10.1]{Ha}. 
By construction, $Q$ can be identified with the moduli space of pairs consisting of an inclusion $E\,\hookrightarrow\,
\SO_X(D)^{\oplus r}$ together with a line bundle $L$ on $X$ such that 
$$\det(E)\otimes \xi^{-1}\ \cong\ L^r .$$
Thus, given one such pair $(E,\,L)$, the tensor product $E\otimes L^{-1}$ has determinant $\xi$. Let $\SL_{\op{univ}}$ be the universal line
bundle on $X\times J(X)$. Then, by construction, the vector bundle
$$\SF\ =\ (\id_X\times \pi_1)^*\SE_{\op{univ}}\otimes (\id_X\times \pi_2)^*\SL_{\op{univ}}^{-1}$$
is a vector bundle on $X\times Q$ such that $\det(E)\,\cong\, \pi_X^*\xi$. Moreover, as $\det(\SE)\,\cong\, \pi_X^*\xi$,
the map $f\,:\,T\,\longrightarrow\, \Div_X^{r,d}(D)$ defined by the inclusion map
$\SE\,\hookrightarrow\, \SO_X(D)^{\oplus r}$ lifts to the map $\tau\,:\,T\,\longrightarrow\, Q$ defined by the pair
$\left(\SE\subset \SO_X(D)^{\oplus r},\, \pi_X^*\SO_X\right)$. Then, $(\id_X\times (\tau\circ \pi_2))^*\SL_{\op{univ}}\,=\,\SO_{X\times T}$. Thus,
$$(\id_X\times \tau)^*\SF\ \cong\ (\id_X\times f)^*\SE_{\op{univ}}\ \cong\ \SE .$$
Therefore, it is enough to prove that the locus $Q^{ns}$ of points $q\,\in\, Q$ such that $\SF\big\vert_{X\times \{q\}}$ is non-stable has codimension
at least 2 in $Q$. Since tensoring by a line bundle does not change stability, this is the same as the
locus of points such
that $(\id_X\times \pi_1^*)\SE_{\op{univ}}\big\vert_{X\times \{q\}}$ is not stable, which is the preimage $\pi_1^{-1}(\Div_X^{r,d,ns}(D))$ of the set of
non-stable points in $\Div_X^{r,d}(D)$. We have seen that $\Div_X^{r,d}(D)$ is irreducible and the fibers of the isogeny $\rho_r$ are
isomorphic to the set of $r$ torsion points of the Jacobian, which is finite. Thus, $\pi_1$ is a finite map, and the codimension of $Q^{ns}$
in all the irreducible components of $Q$ is the same as the codimension of $\Div_X^{r,d,ns}(D)$ in $\Div_X^{r,d}(D)$, which is at least $2$ by Lemma \ref{lemma:codimDiv}. 
\end{proof}

\begin{lemma}
\label{lemma:extensionId}
Suppose that $X$ is a curve of genus at least $3$. Let $\Phi\,:\,\SM(X,r,\xi)
\, \longrightarrow \, \SM(X,r,\xi)$ be an automorphism of the moduli stack such that the restriction of $\Phi$ to $\SM^{\svb}(X,r,\xi)$ is isomorphic to 
the identity
map. Then $\Phi$ is isomorphic to the identity.
\end{lemma}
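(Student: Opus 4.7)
By Lemma~\ref{lem:equivautovb}, the automorphism $\Phi$ corresponds to the pair $(\SP_\Phi,\,\beta_\Phi) \,=\, ((\id_X\times\Phi)^*\SP,\,(\id_X\times\Phi)^*\beta)$, where $(\SP,\,\beta)$ is the Poincar\'e bundle classifying $\id_{\SM(X,r,\xi)}$, and a natural isomorphism $\Phi\,\cong\,\id$ translates into an isomorphism $\SP_\Phi \,\to\, \SP$ compatible with the determinant trivializations. The hypothesis supplies such an isomorphism $\alpha^{\svb}$ over the open substack $X\times\SM^{\svb}(X,r,\xi)$. The task is therefore to extend $\alpha^{\svb}$ uniquely to $X\times\SM(X,r,\xi)$ while preserving determinants.

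The plan is to construct the extension on each family and then check compatibility. Let $\SE$ be a family of vector bundles with determinant $\xi$ on $X\times T$, classified by a morphism $f\,:\,T\,\too\,\SM(X,r,\xi)$, and set $\SE_\Phi \,:=\, (\id_X\times f)^*\SP_\Phi$. By Lemma~\ref{lemma:2approximation} there exist a smooth scheme $Q$ with non-stable locus $Q^{ns}$ of codimension at least $2$ in every irreducible component, a family $\SF$ on $X\times Q$ with $\det(\SF) \,\cong\, \pi_X^*\xi$, classified by some morphism $g\,:\,Q\,\too\,\SM(X,r,\xi)$, and a morphism $\tau\,:\,T\,\too\,Q$ satisfying $f \,=\, g\circ\tau$ and $(\id_X\times\tau)^*\SF\,\cong\,\SE$. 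Setting $\SF_\Phi \,:=\, (\id_X\times g)^*\SP_\Phi$, the pullback of $\alpha^{\svb}$ along $g$ over the open $X\times (Q\setminus Q^{ns})$ (where $g$ factors through $\SM^{\svb}$) gives an isomorphism $\alpha_Q\,:\,\SF_\Phi\big\vert_{X\times(Q\setminus Q^{ns})}\,\to\,\SF\big\vert_{X\times(Q\setminus Q^{ns})}$ preserving determinants.

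Next, I would invoke Hartogs' theorem: the sheaf $\underline{\Hom}(\SF_\Phi,\,\SF)$ is locally free on the smooth scheme $X\times Q$, and the closed subset $X\times Q^{ns}$ has codimension at least $2$, so $\alpha_Q$ extends uniquely to a section $\tilde\alpha_Q$ over $X\times Q$. Under the fixed identifications $\det(\SF_\Phi)\,\cong\,\pi_X^*\xi\,\cong\,\det(\SF)$, the induced scalar $\det\tilde\alpha_Q$ is a section of $\SO_{X\times Q}$ which equals $1$ on the dense open $X\times(Q\setminus Q^{ns})$, hence equals $1$ identically; in particular $\tilde\alpha_Q$ is nowhere degenerate, so it is an isomorphism compatible with the chosen trivializations. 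Pulling back via $\tau$ yields the desired isomorphism $\alpha_T \,:=\, (\id_X\times\tau)^*\tilde\alpha_Q\,:\,\SE_\Phi\,\to\,\SE$ on $X\times T$.

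To finish, one checks that the isomorphisms $\alpha_T$ are independent of the chosen $2$-approximation and natural in $T$, so that they assemble into a global natural transformation $\SP_\Phi\,\Rightarrow\,\SP$ of bundles with fixed determinant. Both statements reduce to the uniqueness of the Hartogs extension across the codimension-two locus $Q^{ns}$: for naturality under $f\,:\,T'\,\to\,T$ one reuses $(Q,\SF)$ with $\tau\circ f$ as a $2$-approximation of $f^*\SE$, forcing $\alpha_{T'}\,=\,(\id_X\times f)^*\alpha_T$; for independence, given two $2$-approximations $(Q_1,\tau_1)$ and $(Q_2,\tau_2)$ of the same family one compares $\tilde\alpha_{Q_1}$ and $\tilde\alpha_{Q_2}$ after pullback to a smooth common refinement (for instance the fiber product $Q_1\times_{\SM(X,r,\xi)}Q_2$, which inherits the codimension-two property since non-stability is defined in terms of the map to $\SM(X,r,\xi)$) and invokes the uniqueness of the extension there. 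The principal technical point is ensuring that the Hartogs extension remains an isomorphism across the non-stable locus, which the determinant computation handles; the remaining checks are formal consequences of uniqueness. Via Lemma~\ref{lem:equivautovb}, the resulting natural transformation gives the required isomorphism $\Phi\,\cong\,\id_{\SM(X,r,\xi)}$.
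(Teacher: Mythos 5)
Your proposal is correct and follows essentially the same route as the paper: both reduce the problem to an arbitrary family via Lemma \ref{lemma:2approximation} and then exploit the codimension-$\ge 2$ non-stable locus of $Q$ to extend from the stable part. The only real difference is in the mechanism — the paper shows $\SF\cong\SF'$ by observing that both are extensions across $X\times Q^{ns}$ of the same bundle on the S2 scheme $X\times Q$, whereas you extend the isomorphism itself as a section of $\underline{\Hom}(\SF_\Phi,\SF)$ by Hartogs and then check the determinant; your version is, if anything, slightly more careful, since the canonicity of that extension is exactly what underpins the naturality and descent compatibilities that the paper leaves implicit.
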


\begin{proof}
For notational simplicity, let us write $\SM\,=\,\SM(X,r,\xi)$ and $\SM^{\svb}\,=\,\SM^{\svb}(X,r,\xi)$. Let $\SE$ be the universal vector bundle on $X\times \SM$. Let $\SE'
\,=\,\Phi^*\SE$. By Lemma \ref{lem:equivautovb}, to prove that 
$\Phi$ is isomorphic to the identity we have to prove that $\SE\,\cong\, \SE'$.

By hypothesis, $\SE\big\vert_{\SM^{\svb}}\,\cong\, \SE'\big\vert_{\SM^{\svb}}$. Let $f\,:\,T\,\longrightarrow\, \SM$ be any map. Let us prove that $f^*\SE\,\cong\, f^*\SE'$. Apply Lemma 
\ref{lemma:2approximation} to the family of vector bundles $f^*\SE$ on $T$. Let $\SF$ be the family on the normal scheme $Q$ given by Lemma \ref{lemma:2approximation}. Let $\tau\,:\,T
\,\longrightarrow\, Q$ be the 
map such that $f^*\SE \,\cong\, \tau^*\SF$. Then, $\SF$ defines a map $\widetilde{f}\,: \,Q \,\longrightarrow\, \SM$. Composing with the automorphism $\Phi$, this map is sent
to a map $$\widetilde{f}'\,:=\, 
\Phi\circ \widetilde{f} \, :\, Q\,\longrightarrow \,\SM .$$ This map is defined by the family of vector bundles
$$\SF'\ :=\ \widetilde{f}^* \SE' .$$
We will now prove that $\SF\,\cong\, \SF'$. Let $Q^{\svb}\,=\,Q\backslash Q^{ns}$ be the locus of points $q\,\in\, Q$ such that $\SF\big\vert_{X\times \{q\}}$ is stable. As
the map $\Phi$ is assumed to restrict to the identity map on the substack $\SM^{\svb}$, the family $\SF\big\vert_{X\times Q^{\svb}}$ and its image $\SF'\big\vert_{X\times Q^{\svb}}$ must
be isomorphic. By Lemma \ref{lemma:2approximation}, $Q^{\svb}$ is dense in $Q$ and the complement $Q^{ns}$ has codimension at least 2 in each component of $Q$.
The scheme $Q$ is normal and, therefore, it is Serre S2. Thus, the vector bundle $\SF\big\vert_{X\times Q^{\svb}}$ admits at most one extension to $X\times Q$. As $\SF$ and $\SF'$
are two such extensions, it follows that $\SF\,\cong\, \SF'$.

Now, since $\Phi$ is an isomorphism of stacks, the family
$$f^*\SE\ =\ (\widetilde{f}\circ \tau)^*\SE\ \cong\ \tau^* \SF$$
is sent to the family
$$f^*\SE'\ =\ (\widetilde{f}\circ \tau)^*\SE'\ \cong\ \tau^* \SF'\ \cong\ \tau^*\SF\ \cong\ f^*\SE .$$
Thus $f^*\SE\,\cong\, f^*\SE'$. Since this holds for each map $f\,:\,T\,\longrightarrow \,\SM$, it follows that
$\Phi$ must be isomorphic to the identity.
\end{proof}

\begin{theorem}\label{thm:main}
Let $X$ and $X'$ be smooth complex projective curves of genus $g$ and $g'$ respectively, with $g,\, g'\, \ge\, 4$. Let $\xi$ and $\xi'$ be line bundles on $X$ and $X'$
respectively. Let $r,\,r'\,>\,0$. Let $\Psi\,:\, \SM(X,r,\xi)\,\longrightarrow\, \SM(X',r',\xi')$ be an
isomorphism between the moduli stacks. Then $r\,=\,r'$, and there exists
\begin{itemize}
\item an isomorphism $\sigma\,:X'\,\too\, X$,
\item a sign $s\,\in \, \{+1,\, -1\}$, which can always be taken as $+1$ if $r\,=\,2$,
\item a line bundle $L$ over $X$ together with an isomorphism $\xi'\,\cong\, \sigma^*(\xi\otimes L^r)^s$
\end{itemize}
such that the map $\Psi$ is isomorphic to the basic transformation $\ST_{\sigma,L,s}$ defined in Definition \ref{def:basicTransformation}.
\end{theorem}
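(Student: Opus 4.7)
The proof proposal assembles the previous lemmas into a single argument. The overall strategy is to first reduce the problem to showing that a stacky automorphism inducing the identity on the moduli scheme must itself be isomorphic to the identity, and then to invoke the classification of scheme-level isomorphisms to finish.

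First, I would apply Lemma \ref{lemma:recoverScheme} to the given isomorphism $\Psi$. Since both $X$ and $X'$ have genus at least $4 \ge 2$, that lemma immediately gives $r = r'$, forces $\Psi$ to restrict to an isomorphism $\Psi^{\ssvb}: \SM^{\ssvb}(X,r,\xi) \to \SM^{\ssvb}(X',r,\xi')$, and produces a compatible isomorphism $\psi: M^{\ssvb}(X,r,\xi) \to M^{\ssvb}(X',r,\xi')$ of the underlying moduli schemes. Since $g,g' \ge 4$, Lemma \ref{lemma:autoScheme} applies to $\psi$ and yields an isomorphism $\sigma: X' \to X$, a sign $s \in \{\pm 1\}$ (with $s = +1$ available when $r = 2$), and a line bundle $L$ on $X$ with $\xi' \cong \sigma^*(\xi \otimes L^r)^s$, such that $\psi = \ST_{\sigma,L,s}^{\op{sch}}$.

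Next, I would form the composition
$$\Phi \ :=\ \ST_{\sigma,L,s}^{-1} \circ \Psi\ :\ \SM(X,r,\xi)\ \longrightarrow\ \SM(X,r,\xi).$$
Since $\ST_{\sigma,L,s}$ is an isomorphism of stacks compatible with the basic transformation on moduli schemes, $\Phi$ is an automorphism of $\SM(X,r,\xi)$ whose induced map on $M^{\ssvb}(X,r,\xi)$ is the identity. Applying Lemma \ref{lemma:recoverScheme} to $\Phi$ shows that $\Phi$ preserves the semistable substack; because the induced map on the moduli scheme is the identity, $\Phi$ also preserves the open substack $\SM^{\svb}(X,r,\xi)$ (stability is detected at the level of $S$-equivalence classes, which are singletons on the stable locus). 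Consequently the restriction $\Phi^{\svb}$ is an automorphism of $\SM^{\svb}(X,r,\xi)$ inducing the identity on $M^{\svb}(X,r,\xi)$.

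The main substantive inputs now take over: Lemma \ref{lemma:tensorLineBundle} shows that $\Phi^{\svb}$ is isomorphic to the identity automorphism of $\SM^{\svb}(X,r,\xi)$, using $g \ge 3$ and the torsion-freeness of the Picard group. Finally, Lemma \ref{lemma:extensionId}, whose proof in turn rests on the $2$-dimensional density result of Lemma \ref{lemma:2approximation} and on the normality of $Q$, upgrades this to the statement that $\Phi$ itself is isomorphic to the identity on the entire stack $\SM(X,r,\xi)$. Unwinding the definition of $\Phi$ yields the desired natural isomorphism $\Psi \cong \ST_{\sigma,L,s}$.

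The only case requiring separate attention is the assertion that $s$ can be chosen to equal $+1$ when $r = 2$; this is already part of the conclusion of Lemma \ref{lemma:autoScheme}, so the same choice of $(\sigma,L,s)$ works here. No new step is hard given the preceding lemmas — the role of this final argument is simply bookkeeping — but the conceptual crux that was genuinely nontrivial, and which deserves emphasis, was the extension result Lemma \ref{lemma:extensionId}, where the lack of separatedness of $\SM$ had to be compensated for by the $2$-dimensional density of stable bundles together with a Hartogs-type argument on the normal parameter scheme $Q$.
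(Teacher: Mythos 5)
Your proposal is correct and follows essentially the same route as the paper's own proof: reduce via Lemma \ref{lemma:recoverScheme} and Lemma \ref{lemma:autoScheme} to an automorphism $\Phi$ inducing the identity on the moduli scheme, then apply Lemma \ref{lemma:tensorLineBundle} on the stable locus and Lemma \ref{lemma:extensionId} to extend to the whole stack. The only difference is cosmetic (the paper cites Lemma \ref{lemma:autoScheme} where it means Lemma \ref{lemma:recoverScheme} for the preservation of the semistable substack by $\Phi$, which your write-up gets right).
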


\begin{proof}
By Lemma \ref{lemma:recoverScheme}, the map $\Psi$ preserves the substack of semistable vector bundles and there exists an isomorphism
$\psi\,:\,M^{\ssvb}(X,r,\xi) \,\longrightarrow\, M^{\ssvb}(X,r,\xi)$ such that the following diagram commutes:
\[
\xymatrix{
\SM^{\ssvb}(X,r,\xi) \ar[rr]^{\Psi^{\ssvb}} \ar[d]_{\pi} && \SM^{\ssvb}(X',r,\xi') \ar[d]^{\pi'}\\
M^{\ssvb}(X,r,\xi) \ar[rr]^{\psi} && M^{\ssvb}(X',r',\xi') \, .
}\
\]
By Lemma \ref{lemma:autoScheme}, there exists an isomorphism
$\sigma\,:\,X\,\too\, X'$, a line bundle $L$ over $X$ and $s\,\in\, \{\pm 1\}$ such
that $\xi'\,\cong\, \sigma^*(\xi\otimes L^r)^s$ and
$\psi\,=\,\ST_{\sigma,L,s}$. Moreover, $s\,=\,1$ if $r\,=\,2$. Now $\ST_{\sigma,L,s}$
extends to an automorphism of the stack. Composing with the inverse of
$\ST_{\sigma,L,s}$ we obtain an automorphism
$$\Phi\,:=\,\ST_{\sigma,L,s}^{-1} \circ \Psi\,:\, \SM(X,r,\xi) \,\longrightarrow\,
\SM(X,r,\xi)$$ which, again by Lemma \ref{lemma:autoScheme}, preserves
the substack $\SM^{\ssvb}(X,r,\xi)$, but which now descends to the
identity map on the moduli scheme $M^{\ssvb}(X,r,\xi)$. In particular, it
also preserves the substack of stable vector bundles
$\SM^{\svb}(X,r,\xi)$. By Lemma \ref{lemma:tensorLineBundle}, we know
that the restriction of it to $\SM^{\svb}(X,r,\xi)$ is isomorphic to the identity map,
and that the identity extends uniquely to the identity on $\SM(X,r,\xi)$ by
Lemma \ref{lemma:extensionId}.
Therefore, we have
$$\Psi\ \cong \ \ST_{\sigma,L,s} .$$
This completes the proof.
\end{proof}

Theorem \ref{thm:main} describes the objects of the 
category of isomorphisms between moduli stacks or, equivalently, the 
automorphisms of a given moduli stack. It says that the set of 
isomorphism classes of such automorphisms coincides with the elements
of the group $\op{Aut}(M^{\ssvb}(X,r,\xi))$. The category is described
in the following theorem.

\begin{theorem}
\label{thm:main2}
If $X$ is a smooth complex projective curve of genus at least $4$, then
there is a canonical equivalence of groupoids
$$\op{Aut}(\SM(X,r,\xi))\ \cong\ \big[ \, 
 \op{Aut}(M^{\ssvb}(X,r,\xi))\, / \, (\ZZ/r\ZZ) \, \big]$$
where the action of $\ZZ/r\ZZ$ on $\op{Aut}(M^{\ssvb}(X,r,\xi))$
is the trivial action. Note that $\op{Aut}(M^{\ssvb}(X,r,\xi))$
is a discrete finite group, so the stack $\op{Aut}(\SM(X,r,\xi))$
is just the disjoint union of a finite number of copies
of $B \ZZ/r\ZZ$.

\end{theorem}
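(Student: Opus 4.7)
The plan is to combine the set-level classification from Theorem \ref{thm:main} with a computation of the 2-automorphisms of each object in the groupoid $\op{Aut}(\SM(X,r,\xi))$.

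First, I would unpack what the groupoid $\op{Aut}(\SM(X,r,\xi))$ consists of: by Definition \ref{def:mapbetweenstacks}, its objects are isomorphisms $\Psi\,:\,\SM\,\too\,\SM$ of stacks (i.e.\ invertible morphisms in $\op{Map}_{\op{Cat}^{\op{cart}}_{/\SC}}(\SM,\SM)$), and its morphisms are natural isomorphisms between such functors. Theorem \ref{thm:main} gives a surjection from the set of basic transformations $\ST_{\sigma,L,s}$ to the set of isomorphism classes of objects in $\op{Aut}(\SM(X,r,\xi))$; combined with Lemma \ref{lemma:autoScheme}, the set of isomorphism classes is exactly $\op{Aut}(M^{\ssvb}(X,r,\xi))$. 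This handles the $\pi_0$ of the 2-group.

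Next I would compute the automorphism group of a single object. By Lemma \ref{lem:equivautovb}, the category $\op{Aut}(\SM(X,r,\xi))$ is equivalent (as a full subcategory of $\op{Map}_{\op{Cat}^{\op{cart}}_{/\SC}}(\SM,\SM)$) to a full subcategory of $\op{VBun}_{r,\xi}(X\times\SM)$: every automorphism $f$ of $\SM$ corresponds to a pair $(\SP_f,\beta_f)\,=\,(\id_X\times f)^*(\SP,\beta)$, and natural isomorphisms between automorphisms correspond to isomorphisms of the associated vector bundles compatible with the trivialization of the determinant. Proposition \ref{prop:autmap} then tells us that for every $f$ the automorphism group of $(\SP_f,\beta_f)$ is the cyclic group $\ZZ/r\ZZ$ of $r$-th roots of unity acting by scalar multiplication on fibers.

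It remains to observe that this $\ZZ/r\ZZ$ sits inside the 2-group trivially, in the sense that there is a canonical identification of the automorphism group of every object of $\op{Aut}(\SM(X,r,\xi))$ with $\ZZ/r\ZZ$ and this identification is preserved under composition with any other automorphism. This is because scalar multiplication by a root of unity on a vector bundle commutes with every bundle morphism, so conjugation by any $\ST_{\sigma,L,s}$ (equivalently, pullback of the scalar automorphism) acts as the identity on $\ZZ/r\ZZ$. Consequently the 2-group extension
\[
1\ \too\ \ZZ/r\ZZ\ \too\ \op{Aut}(\SM(X,r,\xi))\ \too\ \op{Aut}(M^{\ssvb}(X,r,\xi))\ \too\ 1
\]
is a split extension with trivial action, which is precisely the statement that the groupoid is the quotient $[\op{Aut}(M^{\ssvb}(X,r,\xi))/(\ZZ/r\ZZ)]$ with trivial action, i.e.\ a disjoint union of copies of $B\ZZ/r\ZZ$ indexed by $\op{Aut}(M^{\ssvb}(X,r,\xi))$.

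The main technical point I expect to check carefully is the naturality claim in the last paragraph: that the identification of the 2-automorphism group with $\ZZ/r\ZZ$ is truly canonical and that composition in the 2-group respects this identification trivially. This follows formally from the fact that under the equivalence of Lemma \ref{lem:equivautovb}, composition of automorphisms corresponds to pullback-and-tensor of the associated Poincaré-type bundles, and scalar automorphisms pull back to scalar automorphisms with the same scalar; so no hidden cocycle appears.
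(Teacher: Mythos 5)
Your proposal is correct and follows essentially the same route as the paper: Theorem \ref{thm:main} (together with Lemma \ref{lemma:autoScheme}) identifies the set of isomorphism classes of objects with $\op{Aut}(M^{\ssvb}(X,r,\xi))$, and Lemma \ref{lem:equivautovb} combined with Proposition \ref{prop:autmap} identifies the automorphism group of each object with the scalars $\ZZ/r\ZZ$, which is all the stated equivalence of groupoids requires. One caveat on your final paragraph, which goes beyond what the theorem asserts: the claim that conjugation by any $\ST_{\sigma,L,s}$ acts trivially on the scalar $\ZZ/r\ZZ$ fails for $s=-1$ when $r>2$, since the functor sends a morphism $\varphi$ to $(\varphi^{-1})^\vee$ and hence sends the scalar $\lambda$ to $\lambda^{-1}$; this does not affect the equivalence of underlying groupoids, but it would matter if one wanted to describe $\op{Aut}(\SM(X,r,\xi))$ as a split $2$-group extension with trivial action.
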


\begin{proof}
The objects of the groupoid $\op{Aut}(\SM(X,r,\xi))$ of automorphisms
of the moduli stack are described in Theorem \ref{thm:main}.
Using Lemma \ref{lem:equivautovb}, the automorphism group of each object
in this category is identified with the automorphism group of the corresponding
vector bundle on $X\times \SM$, and this group is isomorphic to $\ZZ/r\ZZ$ 
(Proposition \ref{prop:autmap}).
\end{proof}

\section{The case of projective line}
\label{section:rational}

In higher genus, Theorem \ref{thm:main} says that there is a bijection
between the automorphisms of the moduli scheme and the isomorphism classes of
automorphisms of the moduli stack. In the case $X\,=\,\PP^1$ this is not the case. 
For simplicity, we assume that rank $r\,=\,2$ and $\xi\,=\,\SO_{\PP^1}$. A vector bundle
on $\PP^1$ with these invariants is of the form $\SO_{\PP^1}(-n)\oplus \SO_{\PP^1}(n)$ for some $n\in \NN$. It is semistable when $n\,=\,0$, so the moduli scheme
of semistable vector bundles is just one point, so its automorphism group
is trivial.

Consider the action of the automorphism group of $\PP^1$ 
(which is isomorphic to $\PSL(2,\CC)$) on the moduli stack. If
$\sigma$ is an automorphism of $\PP^1$, define $F_\sigma$
as the functor which sends a family of vector bundles $E\,\too\, \PP^1\times T$ to 
$(\sigma\times \id)^*E$ (the action on morphisms is clear).

This action is trivial at the level of isomorphism classes, because 
the pullback of any line bundle on $\PP^1$ 
is isomorphic to itself. But the action is not trivial on morphisms.
An automorphism of $\SO_{\PP^1}(-n)\oplus \SO_{\PP^1}(n)$ defined by the matrix
$$
\left (
\begin{array}{cc}
\lambda & 0\\
s & \mu \\
\end{array}
\right )
$$
where $\lambda,\, \mu\,\in\, \CC^*$ and $s$ is a section of $\SO_{\PP^1}(2n)$. 
The functor $F_{\sigma}$ sends
this morphism to the morphism defined by the matrix 
$$
\left (
\begin{array}{cc}
\lambda & 0\\
\sigma^* s & \mu \\
\end{array}
\right )
$$
If $\sigma$ does not preserve the zeros of $s$, then
it
is easy to check that $F_{\sigma}$ is not isomorphic to the identity.
Therefore, the action of $\PSL(2,\CC)$ on $\SM(\PP^1,2,\SO_{\PP^1})$ is
not trivial.

\section*{Acknowledgments} 

We thank Andr\'es Fern\'andez Herrero for discussions and several suggestions
that helped simplify some of the proofs. This research was supported by grants
PID2022-142024NB-I00, RED2022-134463-T and CEX-2023-001347-S funded by MCIN/AEI/ 
10.13039/501100011033. The second-named author is partially supported by a 
J. C. Bose Fellowship (JBR/2023/000003). The third-named author thanks Shiv Nadar
University for hospitality during a visit where part of this work was done.


\begin{thebibliography}{AAAAA}
\bibitem[ABGM]{ABGM}
{D. Alfaya, I. Biswas, T. G\'omez and S. Mukhopadhyay,}
{Torelli theorem for the moduli stack of vector bundles}.
\textit{Jour. Geom. Phys.} (2024)
\texttt{doi:10.1016/j.geomphys.2024.105350}

\bibitem[AB]{AB}
	{D. Alfaya and I. Biswas,}
	{Automorphism group of a moduli space of framed bundles over a curve}.
	\textit{Geom. Dedicata}
	\textbf{211} (2021), 71--104.

\bibitem[Al]{Al}
	{J. Alper,} 
	{Good moduli spaces for Artin stacks}.
	\textit{Ann. Inst. Fourier}
	\textbf{63} (2013), 2349--2402.

\bibitem[AHLH]{AlHLHe}
{J. Alper, D. Halpern-Leistner and J. Heinloth},
{Existence of moduli spaces for algebraic stacks.} \textit{Invent. Math.}
\textbf{234} (2023), 949--1038.

\bibitem[BGL]{BGL94}
{E. Bifet, F. Ghione and M. Letizia,}
{On the {Abel}-{Jacobi} map for divisors of higher rank on a curve}.
\textit{Math. Ann.}
\textbf{299} (1994), 641--672.

\bibitem[BBN]{BBN}{I. Biswas, L. Brambila-Paz and E. Newstead,}
{Stability of projective Poincar\'e and Picard bundles,}
\textit{Bull. London Math. Soc.} \textbf{41} (2009), 458--472.


\bibitem[BGM1]{BGM10} {I. Biswas, T. G\'omez and V. Mu{\~n}oz},
{Torelli theorem for the moduli space of framed bundles,}
\textit{Math. Proc. of the Cambridge Math. Soc.}
\textbf{148} (2010), 409--423.


\bibitem[BGM2]{BGM13}
	{I. Biswas, T. G\'omez and V. Mu{\~n}oz},
	{Automorphisms of moduli spaces of vector bundles over a curve}.
	\textit{Expo. Math.}
	\textbf{31} (2013), 73--86.

\bibitem[BH]{BH10}
	{I. Biswas and N. Hoffmann},
	{The line bundles on moduli stacks of principal bundles on a curve}.
	\textit{Doc. Math.}
	\textbf{15} (2010), 35--72.

\bibitem[DN]{DN}
	{J.-M. Drezet and M. S. Narasimhan, }
	{Groupe de Picard des vari\'{e}t\'{e}s de modules de fibr\'{e}s semi-stables
	sur les courbes alg\'{e}briques}.
	\textit{Invent. Math.}
	\textbf{97} (1989), 53--94.

\bibitem[EL]{EL}
	{G. Ellingsrud and M. Lehn,}
	{Irreducibility of the punctual quotient scheme of a surface}.
	\textit{Ark. Mat.}
	\textbf{37} (1999), 245--254.

\bibitem[Gom]{Gom}
	{T. L. G{\'o}mez,}
	{Algebraic stacks}.
	\textit{Proceedings mathematical sciences}
	\textbf{111} (2001), 1--31.
		
	
\bibitem[HL1]{HL}
	{D. Halpern-Leistner},
	{On the structure of instability in moduli theory}.
	\texttt{arXiv:1411.0627}

\bibitem[HL2]{HL25}
{D. Halpern-Leistner, }
\textit{The Moduli Space},
\texttt{https://themoduli.space}, 2025.

\bibitem[Ha]{Ha} {R. Hartshorne, }
\textit{Algebraic Geometry,} 
Grad. Texts in Math. 52, Springer Verlag, 1977.

\bibitem[He]{He}
{J. Heinloth},
{Hilbert-Mumford stability on algebraic stacks and applications to 
$\mathcal{G}$-bundles on curves},
\textit{\'Epijournal G\'eom. Alg\'ebrique}
\textbf{1} (2017), Art. 11, 37 pp.

\bibitem[HuLe]{HLe} D. Huybrechts and M. Lehn, {\it The geometry of moduli spaces of sheaves}, Aspects
of Mathematics, E31, Friedr. Vieweg~\&~Sohn, Braunschweig, 1997.

\bibitem[HR]{HR04}
	{J.-M. Hwang and S. Ramanan},
	{Hecke curves and {Hitchin} discriminant}.
	\textit{Ann. Sci. \'Ecole Norm. Sup.}
	\textbf{37} (2004), 801--817.

\bibitem[KP]{KP95}
	{A. Kouvidakis and T. Pantev},
	{The automorphism group of the moduli space of semi stable vector bundles}.
	\textit{Math. Ann.}
	\textbf{302} (1995), 225--268.

\bibitem[Ma]{Ma} M. Maruyama, Openness of a family of torsion free sheaves, \textit{Jour.
Math. Kyoto Univ.} \textbf{16} (1976), 627--637.

\bibitem[Ol]{Ol16}{M. Olsson, }
\textit{Algebraic Spaces and Stacks}
Colloquium Publications, Volume 62,
American Mathematical Society, 
Providence, RI, 2016.

\bibitem[Si]{Si} C. T. Simpson, 
\textit{Moduli of representations of the fundamental group of a smooth projective variety I}, {\it Inst. Hautes \'Etudes Sci. Publ. Math.} {\bf 79} (1994), 47--129.

\end{thebibliography}
\end{document}